\newtheorem{theorem}{Theorem}
\newtheorem{lemma}[theorem]{Lemma}
\newtheorem{definition}[theorem]{Definition}
\newtheorem{problem}{Problem}
\newtheorem*{remark}{Remark}
\newtheorem*{notation}{Notation}
\numberwithin{equation}{section}
\newcommand{\bburl}[1]{\textcolor{blue}{\url{#1}}}
\newcommand{\hr}[1]{\href{#1}{\url{#1}}}
\newcommand{\cube}{\{0, 1, 2\}^n}
\newcommand\modd{\operatorname{mod}}
\newcommand{\nocontentsline}[3]{}
\title{A Generalisation of Sperner's Theorem \\ Using Weighted Chains}
\author{Ya\"el Dillies}
\thanks{Department of Mathematics, Stockholm University. Email: yael.dillies@math.su.se}
\author{Matthew Johnson}
\thanks{Independent researcher, London, UK. Email: mjohnson31415926@gmail.com}
\author{Aleksandra Kowalska}
\thanks{Mathematical Institute, University of Oxford. Email: aleksandra.kowalska@seh.ox.ac.uk}
\begin{document}

\begin{abstract}
    We find the (unique) largest subset of $\{0, 1, 2\}^n$ such that it contains no two elements, one of which is coordinatewise greater than the other, but strictly greater on at most $k$ coordinates. To do so, we decompose the cube into weighted chains.

    In Appendix \ref{appendix_sperner} we present a new proof of Sperner's theorem we found while working on this problem.
\end{abstract}

\maketitle

\section{The problem statement and initial motivation}

In this paper, we consider the following problem:

\begin{problem}
\label{the_main_problem}
    For $d, n, k \geqslant 1$, what is the largest subset $A \subseteq \{0, 1, \dots, d\}^n$ such that there are no two distinct elements $x, y \in A$ with $x_i \leqslant y_i$ for all $1 \leqslant i \leqslant n$ and $x_i < y_i$ for at most $k$ coordinates $i$?
\end{problem}

Hence, $A$ is allowed to have two elements, one of which is coordinatewise greater than the other, but only if the larger one is strictly greater on more than $k$ coordinates.

We note that for $k=1$, the problem is very simple: $|A| \leqslant (d+1)^{n-1}$, and for any $a$ the set
\begin{equation}
    \Big\{ x: |x| \equiv a \ (\modd d+1) \Big\}
\end{equation}
has this size (where $|x|=\sum_i x_i$).

On the other hand, for $\frac{n}{2} \leqslant k \leqslant n$, the problem is also relatively simple: the largest sets are of the same size as
\begin{equation}
    B = \Big\{ x: |x| = \Big\lfloor \frac{dn}{2} \Big\rfloor \Big\},
\end{equation}
This also follows from \cite[Theorem 5.1]{tour_m_part_p_sperner_families}. Below we provide a proof at which we arrived independently (we note that the same proof would also work for cuboids $\{0, 1, \dots, d_1\} \times \dots \times \{0, 1, \dots, d_n\}$).

Let $X=\{0, \dots, d\}^{\lfloor \frac{n}{2} \rfloor}, Y=\{0, \dots, d\}^{\lceil \frac{n}{2} \rceil}$ and let us consider the decomposition $\{0, \dots, d\}^n=X \times Y$. For two tuples $(x, y), (x', y') \in \{0, \dots, d\}^n$ (where $x, x' \in X,$ $y, y' \in Y$), if $x=x'$ and $y \leqslant y'$ or $y=y'$ and $x \leqslant x'$, both cannot belong to $A$. Let us decompose $X$ and $Y$ into symmetric chains (as in the proof by de Bruijn, Tengbergen and Kruyswijk \cite{symmetric_chain_decomposition}) respectively $C_1, \dots, C_p$ and $D_1, \dots, D_q$. We cannot have more than $\min(|C_i|, |D_j|)$ elements $(x, y) \in A$ with $x \in C_i, y \in D_j$. We note that $B$ has exactly this number of such elements. Hence, we have
\begin{equation}
    |A|= \sum_{i, j} |A \cap (C_i \times D_j)| \leqslant \sum_{i, j} \min(|C_i|, |D_j|) =  \sum_{i, j} |B \cap (C_i \times D_j)| = |B|,
\end{equation}
so $B$ is an optimal such set.

However, the problem for the intermediate values of $k$, $2 \leqslant k < \frac{n}{2}$ is much harder. In this paper, we present the solutions for $d=1$ and $d=2$. In particular, the solution for $d=2$ (stated in the following theorem) is the main contribution of this paper.

\begin{theorem}
\label{the_main_theorem}
    For any $0 \leqslant k \leqslant n$, the unique largest subset $A$ of $\cube$ s.t. there are no two distinct elements $x, y \in A$ with $x \leqslant y$ coordinatewise and $x_i < y_i$ on at most $k$ coordinates, is
    \begin{equation*}
        \Big\{ x \in \cube: |x| \equiv n \ (\modd 2k+1) \Big\},
    \end{equation*}
    where $|x|=\sum_i x_i$.
\end{theorem}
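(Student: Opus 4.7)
The validity of $B := \{x \in \{0,1,2\}^n : |x| \equiv n \ (\modd 2k+1)\}$ is immediate: if $x, y \in B$ with $x < y$ coordinatewise, then $|y| - |x|$ is a positive multiple of $2k+1$ and hence at least $2k+1$. Since each coordinate contributes at most $2$ to $|y|-|x|$, the pair must differ on at least $\lceil (2k+1)/2 \rceil = k+1 > k$ coordinates, ruling out the forbidden configuration.

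For the upper bound, I would follow the strategy announced in the abstract and construct a family of chains $(C_\alpha)_\alpha$ in the poset $(\{0,1,2\}^n, \leqslant)$ with positive weights $w_\alpha$ satisfying $\sum_{\alpha:\,x \in C_\alpha} w_\alpha = 1$ for every $x$, together with a per-chain inequality $|A \cap C_\alpha| \leqslant |B \cap C_\alpha|$. Summing would then give
\begin{equation*}
    |A| \,=\, \sum_\alpha w_\alpha |A \cap C_\alpha| \,\leqslant\, \sum_\alpha w_\alpha |B \cap C_\alpha| \,=\, |B|.
\end{equation*}
The most natural target is chains symmetric about layer $n$ in the style of de Bruijn--Tengbergen--Kruyswijk, since such a symmetric chain between layers $n-m$ and $n+m$ contains exactly $2\lfloor m/(2k+1) \rfloor + 1$ elements of $B$.

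The hard part will be designing the chains and weights so that the per-chain bound actually holds, since the obvious candidates fail. The usual de Bruijn--Tengbergen--Kruyswijk decomposition of $\{0,1,2\}^2$ contains the chain $(0,0) < (0,1) < (1,1) < (1,2) < (2,2)$, which for $k = 1$ admits the valid subset $\{(0,0), (1,1), (2,2)\}$ of size $3$, while $B$ only meets this chain at $(1,1)$. Similarly, the ``block'' chain $c_{\pi(1)} c_{\pi(1)} \cdots c_{\pi(n)} c_{\pi(n)}$ supports the valid ``staircase'' $\{(1,0,\ldots,0),\,(2,1,0,\ldots,0),\,\ldots,\,(2,\ldots,2,1)\}$ of size $n$, again exceeding $|B \cap C|$ once $n \geqslant 5$ and $k = 1$. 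The fractional weights must therefore be arranged so as to spread these ``rich'' configurations across several chains and make the per-chain inequality hold on average. Once the upper bound is established, uniqueness would follow by a rigidity argument: equality forces $|A \cap C_\alpha| = |B \cap C_\alpha|$ on every chain of positive weight, and analysing the equality cases of the per-chain bound should pin $A$ down to $B$.
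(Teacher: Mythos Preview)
Your outline correctly identifies the weighted-chain framework, and your observation that standard symmetric chain decompositions fail is exactly right. But the proposal stops at the point where the real idea is needed, and the direction you suggest (spreading ``rich'' configurations across chains so that an inequality $|A\cap C_\alpha|\leqslant |B\cap C_\alpha|$ holds on average) is not what the paper does and would be very hard to execute.

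The missing idea is this: rather than trying to bound $|A\cap C_\alpha|$ by $|B\cap C_\alpha|$ on long symmetric chains, the paper restricts attention to chains of \emph{width at most $k$}, meaning chains along which at most $k$ distinct coordinates ever change. On such a chain, any two elements differ on at most $k$ coordinates, so a valid $A$ can contain at most \emph{one} element of each chain. The family $\mathcal{C}$ is then taken to consist of certain ``basic'' chains that are either of width exactly $k$ and length $2k+1$, or symmetric of smaller width and length; each of these contains \emph{exactly one} element of $B$. The per-chain inequality thus becomes the trivial $|A\cap\gamma|\leqslant 1=|B\cap\gamma|$, and all the work shifts to proving that one can assign strictly positive weights $w_\gamma$ to these chains with $\sum_{\gamma\ni x}w_\gamma=1$ for every $x$. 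That positivity argument (done by a recursive assignment over types, with separate treatment of outer and inner layers and a fair amount of binomial manipulation) is the substance of the paper. Your examples of failure are instructive, but they point to abandoning long symmetric chains altogether rather than to an averaging argument over them.
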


In the next section, we describe related problems and research (and explain, why we use weighted chain decomposition, instead of just a chain decomposition). In Section \ref{section_d1}, we provide the proof in case $d=1$, which is an easier special case. It easily follows from Katona's \cite{katona} and from Grigg's \cite{griggs} (our proof was arrived at independently from these). Although it is not a novel result, we present our new proof for $d=1$ with weighted chains, as it provides a good motivation for the proof of the $d=2$ case (Theorem \ref{the_main_theorem}), which is presented in Section \ref{section_d2_overview}. The last section concludes the paper and discusses possible generalisations for $d>2$.

\subsection{Acknowledgements}
We would like to thank our supervisors Ben Green and Tom Bloom for very helpful discussions and support.

\section{Background and related research}

\subsection{Motivation for the problem}

Our initial motivation for studying the problem was trying to prove the following:

\begin{problem}
    Let $A_1, \dots, A_n$ be disjoint sets. What is the size of the largest family of subsets $\mathcal{F} \subseteq \mathcal{P}(A_1 \cup \dots \cup A_n)$ such that for any two $X, Y \in \mathcal{F}$ if $X \nsubseteq Y$ then we must have $X \cap A_i \neq Y \cap A_i$ for all $1 \leqslant i \leqslant n$?
\end{problem}

Our method of solving the general problem (using Symmetric Chain Decomposition) required the case $k=n$ of the problem presented in this paper. Having solved the special case with $k=n$, we tried to generalise it to other values of $k$, which motivated the work whose results are presented in this note.

The above problem has been solved in a paper \cite[Theorem 5.1]{tour_m_part_p_sperner_families} on various similar generalisations of Sperner's Theorem.

\subsection{A similar problem in a different metric}

Let us consider the following problem, which might sound similar to Problem \ref{the_main_problem}:

\begin{problem}
\label{another_problem}
    For any $k, n$ what is the largest subset $A \subseteq \{0, 1, \dots, d\}^n$ s.t. for any distinct $x, y \in A$ with $x_i \leqslant y_i$ for all $1 \leqslant i \leqslant n$ we must have $\sum_i |x_i - y_i| > k$?
\end{problem}

We note that both this problem and Problem \ref{the_main_problem} can be formulated in a very similar way, asking us to find the largest subset $A \subseteq \{0, 1, \dots, d\}^n$ s.t. for any distinct $x, y \in A$ with $x \leqslant y$ coordinatewise we must have $|x-y|>k$, where in the version above $|x|=\sum_i |x_i|$ and in the original version $|x|=|\{i : x_i \neq 0\}|$. Moreover, the cases with $d=1$ of both problems are identical. However, as we will soon discuss, these two problems are solved with rather different methods.

In \cite{katona}, Katona proved the following theorem, which solves the $d=1$ case of both problems:

\begin{theorem}
\label{katona_theorem}
    The largest possible size of a family of subsets $\mathcal{A} \subseteq 2^{[n]}$ s.t. there are no distinct $X, Y \in \mathcal{A}$ with $X \subseteq Y$ and $|Y \setminus X| \leq k$ is $\sum_{i \equiv \lfloor \frac{n}{2} \rfloor} \binom{n}{i}$.
\end{theorem}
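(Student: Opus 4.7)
The plan is to prove the bound by matching a lower bound from the claimed extremal family with an upper bound obtained by decomposing $2^{[n]}$ into short chains.

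For the lower bound, set $\mathcal{A}^\ast := \{X \subseteq [n] : |X| \equiv \lfloor n/2 \rfloor \pmod{k+1}\}$. If $X \subsetneq Y$ both lie in $\mathcal{A}^\ast$, then $|Y \setminus X| = |Y| - |X|$ is a positive multiple of $k+1$, hence strictly greater than $k$; so $\mathcal{A}^\ast$ is a valid family of the claimed size.

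For the upper bound, call a subfamily $C \subseteq 2^{[n]}$ totally ordered by $\subseteq$ a \emph{short chain} if $|Y \setminus X| \leq k$ for every comparable pair $X \subseteq Y$ in $C$. Any valid family $\mathcal{A}$ contains at most one element of each short chain, so $|\mathcal{A}|$ is bounded by the minimum number of short chains needed to partition $2^{[n]}$. I would aim to exhibit a partition of $2^{[n]}$ into exactly $|\mathcal{A}^\ast|$ short chains by arranging that each part contains exactly one element of $\mathcal{A}^\ast$. In the case $k = 1$, short chains contain at most two elements, and such a partition is exactly a perfect matching of $2^{[n]}$ between even-sized and odd-sized subsets in the Hasse diagram; this exists because the bipartite Hasse graph is $n$-regular with balanced side sizes $2^{n-1}$, so Hall's condition is automatic. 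For general $k$, I would assign each $X \notin \mathcal{A}^\ast$ to a comparable $A \in \mathcal{A}^\ast$ with $|X \triangle A| \leq k$, in such a way that the elements assigned to each $A$ together with $A$ form a chain of diameter at most $k$.

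The main obstacle is establishing the existence of this partition for general $k$. Naively cutting the symmetric chain decomposition of de Bruijn--Tengbergen--Kruyswijk does not suffice: a symmetric chain of length $L$ may contain only one $\mathcal{A}^\ast$-element while requiring $\lceil L/(k+1) \rceil > 1$ short pieces to cover it, so elements must be redistributed across symmetric chains. The weighted chain decomposition developed in Section~\ref{section_d1} below provides a systematic route to this redistribution.
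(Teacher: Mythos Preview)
Your lower bound is correct and matches the paper. Your upper-bound framework via short chains is also the right skeleton, and is exactly Katona's approach as the paper presents it. But the proposal has a genuine gap, compounded by a conceptual mismatch in how you try to close it.

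The gap is that you never construct the partition. You correctly observe that cutting the de Bruijn--Tengbergen--Kruyswijk symmetric chains into blocks of $k+1$ fails, because a symmetric chain may meet $\mathcal{A}^\ast$ only once while needing several short pieces. Katona's proof (Lemma~\ref{katona_lemma} in the paper) resolves this by a direct recursive construction on $n$: one maintains a decomposition of $\{0,1\}^{n-1}$ into chains that are either of length exactly $k+1$ or shorter and symmetric about the middle level, and shows that the standard ``extend the chain / split off the top'' step carries this invariant to $\{0,1\}^n$. That recursion is the missing idea, and it is what the paper cites as the main content of Katona's argument.

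The mismatch is your appeal to Section~\ref{section_d1}. The weighted chain method there does \emph{not} redistribute elements into an integral partition; it abandons integrality altogether. It assigns nonnegative weights $w_\gamma$ to chains in a fixed family $\mathcal{C}$ so that $\sum_{\gamma \ni x} w_\gamma = 1$ for every $x$, and then bounds $|\mathcal{A}| = \sum_{a\in\mathcal{A}}\sum_{\gamma\ni a} w_\gamma \le \sum_\gamma w_\gamma = |\mathcal{A}^\ast|$ directly. This is a perfectly valid proof of the theorem, but it is a different argument from the one you set up: it is a fractional (LP-duality style) bound, not a ``route to redistribution'' of elements among integral chains. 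If you want to invoke it, you should drop the integral partition framing and state the weighted inequality; if you want to keep the partition framing, you need Katona's recursion.
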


The main part of Katona's proof is the following more general lemma (\cite[Theorem 1]{katona}):

\begin{lemma}
\label{katona_lemma}
    For a cuboid $\{0, 1, \dots, d_1\} \times \dots \times \{0, 1, \dots, d_n\}$ let us call a \emph{chain} a tuple of points $x_1 \leqslant x_2 \leqslant \dots \leqslant x_l$ and $\sum_j |x_{i+1, j} - x_{i, j}|=1$ for all $1 \leqslant i \leqslant l-1$ (where $x_{i,j}$ denotes the $j^{\text{th}}$ coordinate of point $x_i$ and the inequalities are coordinatewise).

    Then for any $l, d_1, \dots, d_n$ there exists some $m$ s.t. the cuboid $\{0, 1, \dots, d_1\} \times \dots \times \{0, 1, \dots, d_n\}$ can be decomposed into chains, each of which is either of length $l$ or shorter than $l$ and symmetrical around level $m$ (i.e., the sum of coordinates of its first and last element is $2m$).
\end{lemma}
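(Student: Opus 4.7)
The plan is to prove this by induction on $n$, following the structure of the de Bruijn--Tengbergen--Kruyswijk proof of the symmetric chain decomposition but ``capping'' chains at length $l$.

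For the base case $n=1$, the cuboid is the chain $\{0,1,\ldots,d_1\}$ of length $d_1+1$. Writing $d_1+1 = ql + r$ with $0 \leqslant r < l$, I would split it into $q$ consecutive blocks of length $l$ and (if $r > 0$) one block of length $r$, placing the length-$r$ block so that it is symmetric about a chosen level $m$ (taking $m$ to be an integer or a half-integer as parities dictate).

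For the inductive step, apply the induction hypothesis to $\{0,\ldots,d_1\} \times \cdots \times \{0,\ldots,d_{n-1}\}$ with the same $l$, yielding a decomposition into chains each of length $l$ or else shorter and symmetric around some common level $m'$. For every chain $C$ of length $c$ with bottom at level $a$, consider the ladder $C \times \{0,1,\ldots,d_n\}$, which sits as a $c \times (d_n+1)$ grid inside the full cuboid and has centre $a + (c-1+d_n)/2$. I would further decompose each ladder into chains of the full cuboid, each of length $l$ or else shorter and symmetric about the single common level $m := m' + d_n/2$.

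The choice $m = m' + d_n/2$ is essentially forced: for a short $C$ of length $c$ (symmetric about $m'$, so $a = m' - (c-1)/2$), the ladder's centre equals exactly $m$, so the standard symmetric chain decomposition of the ladder yields chains already symmetric about $m$, and each chain of length $L > l$ can then be broken into length-$l$ blocks plus a symmetric length-$r$ residual still centred at $m$, by a case analysis on $L \bmod 2l$ distinguishing whether the number of $l$-blocks is even or odd. For a length-$l$ chain $C$ (not necessarily centred), the ladder has width $l$ along the $C$-direction, and this extra flexibility lets one slide the truncation seams so that any short residual lands exactly at $m$ rather than at the ladder's own centre. Verifying that all these local truncations can be carried out simultaneously --- in particular, that the parity constraints linking $L$, $l$ and the residual sizes align consistently across every ladder --- is the delicate combinatorial bookkeeping I anticipate will be the main obstacle, and where the bulk of the work lies.
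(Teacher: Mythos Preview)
The paper does not itself prove this lemma; it cites Katona and only remarks that his argument is ``a recursive construction, somewhat reminiscent of de Bruijn's, Tengbergen's and Kruyswijk's''. Your inductive framework is therefore in the right spirit, but the core of your inductive step has a genuine gap.

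For a short chain $C$ (length $c<l$, symmetric about $m'$) you propose to take the symmetric chain decomposition of the ladder $C\times\{0,\ldots,d_n\}$ and then break each resulting chain of length $L>l$ into length-$l$ blocks together with one short residual symmetric about $m=m'+d_n/2$. This fails whenever $\lfloor L/l\rfloor$ is odd and $L\not\equiv 0\pmod l$: writing $L=ql+r$ with $0<r<l$, the residual must occupy the central $r$ levels of the chain, leaving two flanks each of length $ql/2$, and these are unions of length-$l$ blocks only when $q$ is even. No case analysis on $L\bmod 2l$ repairs this, since within a single chain symmetric about $m$ there is exactly one place a short residual symmetric about $m$ can sit, and no alternative residual length is available.

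Concretely, take $d_1=1$, $d_2=3$, $l=3$. The base case forces $m'=\tfrac12$, so your rule gives $m=2$. The ladder is the whole cuboid $\{0,1\}\times\{0,1,2,3\}$; its symmetric chain decomposition has chains of lengths $5$ and $3$ centred at level $2$, and the length-$5$ chain cannot be truncated as you describe (a length-$2$ residual symmetric about the integer $2$ would start at the non-integer level $\tfrac32$; a length-$1$ residual at level $2$ leaves two flanks of length $2\ne 3$). In fact \emph{no} decomposition of this cuboid works with $m=2$: the only admissible short chains are singletons at level $2$, and removing both of them separates levels $\{0,1\}$ from $\{3,4\}$, leaving pieces too short for length-$3$ chains. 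By contrast $m=\tfrac12$ does work, e.g.\ with chains $\bigl((0,0),(0,1)\bigr)$, $\bigl((1,0),(1,1),(1,2)\bigr)$, $\bigl((0,2),(0,3),(1,3)\bigr)$. So the claim that $m=m'+d_n/2$ is ``essentially forced'' is incorrect, and the SCD-then-truncate recipe is too rigid: the ladders must be decomposed by a genuinely different rule, not by post-processing their symmetric chain decompositions.
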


Katona proves this lemma using a recursive construction, somewhat reminiscent of de Bruijn's, Tengbergen's and Kruyswijk's proof of the existence of Symmetric Chain Decomposition \cite{symmetric_chain_decomposition}.

We note that in particular, the lemma solves Problem \ref{another_problem}. Indeed, let
\begin{equation*}
    B_p := \Big\{ x \in \{0, 1, \dots, d_1\} \times \dots \times \{0, 1, \dots, d_n\} : |x| \equiv p \ (\modd k+1) \Big\}
\end{equation*}
(we note that for any $p$, $B_p$ satisfies the conditions of Problem \ref{another_problem}). Let us consider a decomposition of a cube into chains with $l=k+1$.

Since each chain passes through exactly one element of $B_m$, the number of chains is at most $|B_m|$.
On the other hand, since no chain can pass through more than one element of a set $A$, satisfying the problem's conditions, so $|A| \leqslant |B_m|$. Hence, $B_m$ is one of optimal sets, so $|B_m|=\max_p |B_p|$, which implies that $m=\lfloor \frac{dn}{2} \rfloor$ or $m=\lfloor \frac{dn}{2} \rfloor$ (this is a simple result, presented in Lemma \ref{layers_modulo_largest}). Hence, $B_{\lfloor \frac{dn}{2} \rfloor}$ is the optimal such set.

\subsection{Why the methods used to solve Problem \ref{the_main_problem} are different from those used for Problem \ref{another_problem}.}

Despite Problems \ref{the_main_problem} and \ref{another_problem} appearing similar, they require rather different approaches. In particular, none of methods for proving Problem \ref{another_problem} that we found seemed adaptable to Problem \ref{the_main_problem}, which was suggesting that to solve it, one needs to study the cube from a different angle.

The first method of solving Problem \ref{another_problem} is Katona's decomposition of the cube into chains that are either of length $k+1$ or symmetric around some $m$, as discussed above. If it is possible to construct a similar decomposition, but with chains of unlimited length and width at most $k$ (where a \emph{width} of a chain is the number of coordinates on which its first and last elements differ), then it could be used to solve Problem \ref{the_main_problem} analogously to how Problem \ref{another_problem} was solved above.

However, Katona's recursive (for $n$, the number of dimensions of the cube, increasing) construction of bounded-length chain decomposition does not seem to adapt to bounded-width chain decomposition. The main difference seems to lie in the fact that for the purposes of bounded-length decomposition, only the length of a chain matters (and not its shape, or which points specifically it passes through) - so knowing the length of a chain and where it ends is enough to be able to extend it without breaking the length condition. This is, however, not the case for a bounded-width chain decomposition, where to extend a chain while preserving its width condition, we require information on the shape of the chain (i.e., on which coordinates it has changed).

We also note a small `aesthetical' advantage of weighted chain decomposition over a standard chain decomposition: while the latter one is defined recursively, so how it looks like for larger dimensions might not be clear, the chains that appear in the former with positive weight are explicitly defined (only with weights defined recursively). Moreover, unlike the latter, the former decomposition is preserved under permuting coordinates.

Another method of solving Problem \ref{another_problem} is by using a method presented by D. Nagy and K. Nagy in \cite{nagys} (where Katona's Theorem \ref{katona_theorem} is one of results the authors use as an example for application of their method).

The reason the Nagys' method fails for Problem \ref{the_main_problem} is that adapting it would require that for any maximal set $A \subseteq \{0, 1, \dots, d\}^n$ satisfying our condition (from Problem \ref{the_main_problem}) and any ordered subset $(C_1, \dots, C_l) \subseteq \{0, 1, \dots, d\}^n$ s.t. $|C_{i+1}-C_i|=1$ of length $1+dn$, the set $A \cap C$ is the largest possible subset of $C$ satisfying our condition, which is not the case.

\section{The \texorpdfstring{$d=1$}{d=1} case}
\label{section_d1}

In this section, for any $1 \leqslant k \leqslant n$ we find the largest $A \subseteq \{0, 1\}^n$ such that $(A-A) \cap X = \emptyset$, where $X=\Big\{x \in \{0, 1\}^n: |x| \leqslant k\Big\}$. For $n$ even, the unique such set is
\begin{equation}
\label{b_definition}
    B = \Big\{ x \in \{0, 1\}^n: |x| \equiv \frac{n}{2} \ (\modd k+1) \Big\},
\end{equation}
and for $n$ odd the only two such sets are
\begin{equation}
    B_1 = \Big\{ x \in \{0, 1\}^n: |x| \equiv \Big\lfloor \frac{n}{2} \Big\rfloor \ (\modd k+1) \Big\} \text{  and  } B_2 = \Big\{ x \in \{0, 1\}^n: |x| \equiv \Big\lceil \frac{n}{2} \Big\rceil \ (\modd k+1) \Big\}.
\end{equation}
We note that the case $k=n$ immediately implies Sperner's theorem. This special case has the same proof idea, but slightly different (simpler) details of the proof and can be found in Appendix \ref{appendix_sperner}. To our knowledge, this is a new proof of Sperner's Theorem, not found in the literature.

\subsection{Definitions and notation}

First, let us introduce a couple of definitions that will make dealing with elements of the cube $\{0, 1\}^n$ easier.

\begin{definition}
    Let a \emph{layer} $L_i$ be the subset of the cube $\{0, 1\}^n$ of all elements with a specific number of $1$s, i.e.
    \begin{equation*}
        L_m=\Big\{ x \in \{0, 1\}^n: |x|=m \Big\}.
    \end{equation*}
    We will call the layers $L_m$ with $m \leqslant \frac{n}{2}$ \emph{lower layers,} and the layers with $m \geqslant \frac{n}{2}$ \emph{upper layers.} If $n$ if even, the middle layer can be called either.

    For $n$ even the layer $L_{\frac{n}{2}}$ and for $n$ odd the layers $L_{\lfloor \frac{n}{2} \rfloor}, L_{\lceil \frac{n}{2} \rceil}$ are called the \emph{middle layers.}
\end{definition}

We note that $|L_m|=\binom{n}{m}$. We also note that types are orbits under the group action of permuting coordinates.

\begin{definition}
    For $x, y \in \{0, 1\}^n$ we write $x \prec y$ if there exists a coordinate $j$ such that $x_i=y_i$ for all $i \neq j$ and $x_j=0$, $y_j=1$. We note that this relation isn't transitive (we don't need to make $\{0, 1\}^n$ into a poset structure for this proof).

    We call a \emph{chain} an ordered tuple $(x_1, x_2, \dots, x_l) \subseteq \{0, 1\}^n$ such that $x_1 \prec x_2 \prec \dots \prec x_l$.

    The \emph{length} of a chain is the number of its elements.
\end{definition}

We note that a tuple consisting of a single element is a chain. We also note that if $\gamma=(x_1, \dots, x_l)$ is a chain and $x_1 \in L_m$, then $x_2 \in L_{m+1}, \dots, x_l \in L_{m+l-1}$.

\begin{definition}
    We call a chain \emph{symmetric} if $\gamma=(x_1, \dots, x_l)$ and $x_1$ lies in layer $L_m$ (so $x_l$ lies in layer $L_{m+l-1}$) and $\frac{m+(m+l-1)}{2}=\frac{n}{2}$.
\end{definition}

Let us note that for $n$ even, all symmetric chains are of odd length, with the first element in layer $L_{\frac{n}{2}-i}$ and the last element in layer $L_{\frac{n}{2}+i}$ for some $i$. For $n$ odd, all symmetric chains are even, with the first element in layer $L_{\lfloor \frac{n}{2} \rfloor -i}$ and the last element in layer $L_{\lceil \frac{n}{2} \rceil + i}$.

\subsection{Weighted chains}

Let
\begin{multline}
    \mathcal{C} = \Big\{ \gamma: \gamma \text{ is a chain in } \{0, 1\}^n, |\gamma| = k+1\Big\}
    \\ \cup \Big\{ \gamma: \gamma \text{ is a symmetric chain in } \{0, 1\}^n, |\gamma| < k+1 \Big\}
\end{multline}
(where $|\gamma|$ denotes the number of elements in a chain $\gamma$). We note that each of the chains in $\mathcal{C}$ contains exactly one element of $B$.

The figure below shows two examples of $\mathcal{C}$. The dots on the drawings represent layers, and a brace between two dots means that all chains starting at one of these layers and finishing at the other one are in $\mathcal{C}$. The black dots represent the middle layer(s).

\begin{figure}[h!]
    \centering
    \begin{minipage}{0.45\textwidth}
        \centering
        \includegraphics[scale=0.7]{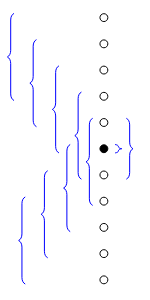}
        \caption{$n=10, k=3$}
        \label{d1_drawing1}
    \end{minipage}\hfill
    \begin{minipage}{0.45\textwidth}
        \centering
        \includegraphics[scale=0.55]{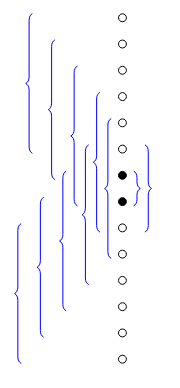}
        \caption{$n=13, k=5$}
        \label{d1_drawing2}
    \end{minipage}
\end{figure}

We note that if $n$ is even, $\mathcal{C}$ includes `singleton' chains consisting of elements of the middle layer, and for $n$ odd it includes chains of length two, consisting of all pairs $x \prec y$ for $x \in L_{\lfloor \frac{n}{2} \rfloor}, y \in L_{\lceil \frac{n}{2} \rceil}$. The lemma below is the crucial step of the proof for $d=1$.

\begin{lemma}
    If there exist strictly positive weights $w_{\gamma}$ for each $\gamma \in \mathcal{C}$ such that for any $x \in \{0, 1\}^n$, we have
    \begin{equation}
        w_x := \sum_{\substack{\gamma \in \mathcal{C}: \\ x \in \gamma}} w_{\gamma} = 1
    \end{equation}
    (we will call $w_x$ the \emph{induced} weight of $x$), then $B$ (for $n$ even) or $B_1$ and $B_2$ (for $n$ odd) are the only largest sets satisfying our condition.
\end{lemma}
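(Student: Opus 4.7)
The plan is a weighted double-counting bound $|A| \le |B|$ followed by an analysis of the equality case. Any chain $\gamma \in \mathcal{C}$ contains at most one element of $A$, since its elements are pairwise comparable and any two differ on at most $|\gamma|-1 \le k$ coordinates. Reversing the order of summation,
\[
    |A| \;=\; \sum_{x \in A} w_x \;=\; \sum_{\gamma \in \mathcal{C}} w_\gamma\,|A \cap \gamma| \;\le\; \sum_{\gamma \in \mathcal{C}} w_\gamma \;=\; |B|,
\]
where the last equality uses the observation (immediate from the construction of $\mathcal{C}$) that every $\gamma \in \mathcal{C}$ meets $B$ (and, for $n$ odd, equally $B_1$ and $B_2$) in exactly one point. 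The set $B$ (resp.\ $B_1, B_2$) also satisfies the problem constraint, because any two comparable elements of it differ on at least $k+1$ coordinates. Equality in the displayed bound, combined with strict positivity of the weights, forces $|A \cap \gamma| = 1$ for every $\gamma \in \mathcal{C}$, and from here I aim to deduce that $A$ equals $B$ (resp.\ is one of $B_1, B_2$).

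The heart of the argument is to show that $\mathbf{1}_A$ is constant on each layer $L_m$. Given $x, x' \in L_m$ that differ by a single swap --- $x_i = 1,\, x_j = 0$ and $x'_i = 0,\, x'_j = 1$, with $x, x'$ agreeing elsewhere --- let $z$ be obtained from $x$ by flipping its $j$-th coordinate to $1$, so $z \in L_{m+1}$ with both $x \prec z$ and $x' \prec z$. I extend the $2$-chain $(x,z)$ to a chain $\gamma \in \mathcal{C}$ of length $k+1$ by prepending a descending chain from $x$ and appending an ascending chain from $z$; since $m + (n - m - 1) = n - 1 \ge k - 1$ there is always enough room. Replacing $x$ by $x'$ in $\gamma$ yields a second chain $\gamma' \in \mathcal{C}$ of the same length, and since $|A \cap \gamma| = |A \cap \gamma'| = 1$ while $\gamma$ and $\gamma'$ agree outside $\{x, x'\}$, I obtain $\mathbf{1}_A(x) = \mathbf{1}_A(x')$. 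The swap graph on $L_m$ is connected for $0 < m < n$ (any two elements of $L_m$ are joined by a sequence of single swaps), so $\mathbf{1}_A$ takes a common value $f_m \in \{0,1\}$ on $L_m$; the layers $L_0$ and $L_n$ are singletons.

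Finally, I read off the sequence $(f_m)$. The length-$(k+1)$ chain constraint gives $\sum_{i=0}^{k} f_{m+i} = 1$ for every $0 \le m \le n - k$; subtracting consecutive instances yields $f_{m+k+1} = f_m$, so $(f_m)$ is periodic with period $k+1$ and takes the value $1$ exactly once per period. The symmetric chains pin down the phase. For $n$ even, the singleton chains $\{x\}$ with $x \in L_{n/2}$ force $f_{n/2} = 1$, and periodicity then gives $A = B$. For $n$ odd, the length-$2$ symmetric chains force $f_{\lfloor n/2 \rfloor} + f_{\lceil n/2 \rceil} = 1$, leaving exactly the two options $A \in \{B_1, B_2\}$. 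I expect the main obstacle to be the swap-transport step of the second paragraph: carefully constructing the two length-$(k+1)$ chains $\gamma, \gamma' \in \mathcal{C}$ differing only at the swap position, and verifying that the swap graph is connected on every non-trivial layer.
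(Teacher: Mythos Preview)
Your double-counting bound and the deduction that equality forces $|A\cap\gamma|=1$ for every $\gamma\in\mathcal{C}$ match the paper exactly. Your uniqueness argument, however, is organised differently. The paper proceeds by \emph{layer peeling}: it first uses the singleton chains (or, for $n$ odd, the length-$2$ symmetric chains together with a swap argument restricted to the middle layer) to obtain $L_{\lfloor n/2\rfloor}\subseteq B'$, then invokes the \emph{problem constraint} to exclude the next $k$ layers, then the chain constraint to include $L_{\lfloor n/2\rfloor-(k+1)}$, and iterates. You instead establish layer-constancy of $\mathbf{1}_A$ on \emph{every} layer via a uniform swap argument using length-$(k{+}1)$ chains, and only afterwards solve the resulting one-dimensional periodic problem. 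Your route is tidier in that it never mixes the problem constraint back into the uniqueness step, at the cost of a more delicate chain construction; the paper's route avoids that construction but interleaves two kinds of constraint.

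On the swap-transport step you flag as the main obstacle: your sentence ``replacing $x$ by $x'$ in $\gamma$ yields a second chain $\gamma'$'' is not automatic for an arbitrary extension of $(x,z)$. If the element immediately below $x$ in $\gamma$ is obtained by flipping some coordinate $\ell\neq i$, then that element and $x'$ differ in three coordinates and are incomparable. The fix is simple: when the descending part is nonempty, make its \emph{first} step flip coordinate $i$, landing at the common lower neighbour $w$ (with $w\prec x$ and $w\prec x'$); all further descending steps from $w$ flip coordinates common to $x$ and $x'$, so the replacement $x\mapsto x'$ preserves the chain relations. The length and room constraints you wrote down ($a+b=k-1$, $a\le m$, $b\le n-m-1$) are unaffected. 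With this detail made explicit, your argument is complete, and the connectivity of the swap graph on each $L_m$ with $0<m<n$ is standard.
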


First we will prove the lemma and then show that such an assignment of weights exists.

\begin{proof}
        Assuming that such an assignment of weights exists, we can easily finish the proof. First, let us do so for $n$ even. Indeed, let us assume that $A$ satisfies the conditions of the problem. Then $A$ has at most one element in any chain in $\mathcal{C}$ with positive weight, and $B$ (defined in Equation \ref{b_definition}) has exactly one element in each such chain. Hence, we have
    \begin{equation}
        |A| = \sum_{a \in A} 1 = \sum_{a \in A} w_a = \sum_{a \in A} \sum_{\substack{\gamma \in \mathcal{C}: \\ a \in \gamma}} w_{\gamma} = \sum_{\substack{\gamma \in \mathcal{C}: \\ A \cap \gamma \neq \emptyset}} w_{\gamma} \leqslant \sum_{\substack{\gamma \in \mathcal{C}}} w_{\gamma} = \sum_{\substack{\gamma \in \mathcal{C}: \\ B \cap \gamma \neq \emptyset}} w_{\gamma} = \sum_{b \in B} \sum_{\substack{\gamma \in \mathcal{C}: \\ b \in \gamma}} w_{\gamma} = \sum_{b \in B} w_b = |B|,
    \end{equation}
    so $B$ is one of the maximal sets satisfying the condition. Let us suppose that we have $|B|=|B'|$ for some other $B'$ satisfying the conditions of the problem. Since the weights of all chains are positive, $B'$ must have at least one element in all chains of $\mathcal{C}$. In particular, it needs to include the entire middle layer (as the singleton chains with elements of the middle layer are in $\mathcal{C}$).
    
    Now let us note that, for any chain of length $k+1$ with the first element in layer $L_{\frac{n}{2}-k-1}$ and the last element in layer $L_{\frac{n}{2}-1}$, at least one of its elements must be in $B'$. However, having any element from $L_{\frac{n}{2}-k}, \dots, L_{\frac{n}{2}-2}$ or $L_{\frac{n}{2}-1}$ in $B'$ would violate the condition, so the first element of the chain must be in $B'$. Reasoning this way for all such chains, we find that $L_{\frac{n}{2}-k-1} \subseteq B'$.
    
    Repeating this reasoning for further layers and on the other side of the cube, we find that $L_{\frac{n}{2}+i(k+1)} \subseteq B'$ for any $i$, and so $B \subseteq B'$, so in fact $B'=B$.
    
    For $n$ odd, the proof is completely analogous, except that at the beginning of showing the uniqueness, we need to show that either $L_{\lfloor \frac{n}{2} \rfloor} \subseteq B'$ or $L_{\lceil \frac{n}{2} \rceil} \subseteq B'$.
    
    To do this, let us assume without loss of generality that $x \in B'$ for some $x \in L_{\lfloor \frac{n}{2} \rfloor}$, and we will show that $L_{\lfloor \frac{n}{2} \rfloor} \subseteq B'$. Indeed, for any $y \in L_{\lfloor \frac{n}{2} \rfloor}$ which differs from $x$ in exactly two coordinates, there exists $z \in L_{\lceil \frac{n}{2} \rceil}$ such that $x \prec z, y \prec z$, so $z \notin B'$, but also one of $y, z$ must be in $B'$, so $y \in B'$. Hence, since we can get from any element of $L_{\lfloor \frac{n}{2} \rfloor}$ to any other by changing two coordinates at a time (by swapping a $0$ with a $1$ a few times), we in fact have $L_{\lfloor \frac{n}{2} \rfloor} \subseteq B'$. The rest of the proof is analogous to the case of $n$ even.
\end{proof}

Now, we only need to show that an assignment of positive weights to chains in $\mathcal{C}$, such that the induced weight of any element of the cube is $1$, is possible. We do so in the remainder of this section, first defining the weights and then showing that they are positive.

\subsection{Assigning chain weights so that the induced weights are all \texorpdfstring{$1$}{1}}

We will say that a chain whose element furthest from the middle of the cube (which must be either its first or last element) lies in layer $L_m$ \emph{starts} at that layer (the symmetric chains can be said to start at either of their ends).

We will assign the same weight to all chains in $\mathcal{C}$ starting at the same layer. Let $W(m)$ be the total weight of all chains starting at layer $L_m$ (where $W(m)$ is the common weight of these chains multiplied by the number of such chains).

We can assign the weights recursively, from chains starting the furthest from the middle, to those starting the closest, ensuring at each step that all points have induced weight $1$, as follows:

Let $W(0)=W(n)=1$ (as there is only one point in both of these layers). Then, let
\begin{equation}
    W(m)=\binom{n}{m}-\sum_{\substack{i: \ |i-m| \leqslant k-1, \\
    |i-\frac{n}{2}|>|m-\frac{n}{2}|}} W(i)
\end{equation}
for $m=2, n-2, 3, n-3, \dots$. We note that at the moment of assigning $W(m)$, the weights of all other chains passing through layer $m$ (except for those starting at that layer) have been assigned, so this ensures that the total weight of all chains passing through $L_m$ is $\binom{n}{m}$. Since the weights of all chains starting at one layer are the same, also the induced weights of all points from the same layer are the same, this ensures that the induced weights of all points are $1$.

Now we need only need to prove that all the weights assigned this way are positive, i.e. $W(i) > 0$ for all $i$.

We will do this separately for the outer and inner layers, defined below.

\begin{definition}
    Let us call $L_m$ an \emph{outer layer} if $|m-\frac{n}{2}| \geqslant \frac{k}{2}$. Otherwise, if $|m-\frac{n}{2}| < \frac{k}{2}$, we call $L_m$ an \emph{inner layer}.
\end{definition}

We note that a layer is outer if and only if there are no non-symmetric chains from $\mathcal{C}$ starting on the other side of the cube that pass through it, and inner if there exist such chains. The chains starting at outer layers are all of length $k+1$, and those starting at inner layers are all symmetric and shorter than $k+1$ (for $n$ and $k$ of the same parity, the chains starting at the innermost outer layers $L_{\frac{n-k}{2}}$/$L_{\frac{n+k}{2}}$ are both symmetric and of length $k+1$). On Figure \ref{d1_drawing1}, the three middle layers are inner, and on Figure \ref{d1_drawing2}, the four middle layers are inner.

We note that the weights are symmetric, i.e. $W(m)=W(n-m)$. Hence, it suffices to prove the claim for lower layers.

\subsection{Weights of chains starting in outer layers are positive}

Let us first show that the weights of chains starting at the outer layers are positive. For $L_m$ an outer lower layer, let us note that the chains passing through $L_{m}$ are the same as the chains passing through $L_{m-1}$, except for:
\begin{itemize}
    \item The chains starting at $L_m$ (as they do not pass through $L_{m-1}$).
    \item If $m \geqslant k+1$, the chains starting at $L_{m-k-1}$ (as they reach $L_{m-1}$ but not $L_m$).
\end{itemize}
Hence, we have:
\begin{equation}
    W(m)=\binom{n}{m} - \binom{n}{m-1} + W(m-k-1)
\end{equation}
(where $W(i)=0$ for $i<0$) and we can show by induction on $m$ that $W(m) > 0$ for $m \geqslant 0$, for $L_m$ an outer layer (as the binomial coefficient $\binom{n}{m}$ are increasing for $m < \frac{n}{2}$).

\subsection{Weights of chains starting in inner layers are positive}

Showing that the weights of chains starting at inner layers are positive is a bit more tricky, as these layers have the additional contributions from the other side of the cube.

Let us define:
\begin{equation}
    U(m)= \begin{cases}
        \displaystyle \sum_{\substack{0 \leqslant i \leqslant m, \\ i \equiv m \ (\modd k+1)}} \binom{n}{i} - \binom{n}{i-1}, & \text{if } m\leqslant\frac{n}{2}, \\
         \displaystyle\sum_{\substack{m \leqslant i \leqslant n, \\ i \equiv m \ (\modd k+1)}} \binom{n}{i} - \binom{n}{i+1}, & \text{if } m>\frac{n}{2}.
    \end{cases}
\end{equation}
(the reason for different definitions on both sides of the cube is that $\binom{n}{k}=\binom{n}{n-k}$, so this ensures that $U(m)=U(n-m)$ and $U(m)=W(m)$ for outer layers).

For $L_m$ an outer layer we have $W(m)=U(m)$. We will now show that for $L_m$ an inner layer we have $W(m)=U(m)-U(m+k)$ (where, since $L_{m+k}$ is an outer layer, we have $W(m+k)=U(m+k)$).

Again, it suffices to show this for the lower layers. It is relatively clear for the outermost inner layer, i.e. $L_m$ for $m= \Big\lceil \frac{n-k+1}{2} \Big\rceil$, as the only chains starting in the upper side of the cube passing through this layer are those starting at $L_{m+k}$.

For further inner layers, we can show this by induction: the chains passing through $L_{m+1}$ are exactly the chains passing through $L_{m}$, except for the chains starting at $L_{m+1}$, $L_{m-k}$ and at $L_{m+1+k}$. Hence, we have:
\begin{equation}
    W(m+1)=\binom{n}{m+1}-\binom{n}{m} - W(m+1+k)+W(m-k).
\end{equation}
By the inductive hypothesis, we have $W(m)=U(m)-U(m+k)$ and also, since these layers are outer, $W(m+1+k)=U(m+1+k)$ and $W(m-k)=U(m-k)$. Since, from the definition of $V$, we have $\binom{n}{m+1}-\binom{n}{m} + U(m-k)=U(m+1)$, we get that indeed $W(m+1)=U(m+1)-U(m+1+k)$.

Now we only need to show that for $L_m$ an inner layer, we have $U(m)-U(m+k)>0$. However,
\begin{multline}
    U(m)-U(m+k)\\
    =\sum_{\substack{0 \leqslant i \leqslant m, \\ i \equiv m \ (\modd k+1)}} \bigg( \binom{n}{i} - \binom{n}{i-1}\bigg) - \sum_{\substack{m+k \leqslant i \leqslant n, \\ i \equiv m+k \ (\modd k+1)}} \bigg( \binom{n}{i} - \binom{n}{i+1}\bigg)\\
    =\sum_{\substack{0 \leqslant i \leqslant n, \\ i \equiv m \ (\modd k+1)}} \binom{n}{i} - \sum_{\substack{0 \leqslant i \leqslant n, \\ i \equiv m-1 \ (\modd k+1)}} \binom{n}{i},
\end{multline}
which is positive for $\frac{n-k}{2} < m \leqslant \frac{n}{2}$. It is a well-known fact that can be proven by induction on $n$ (it is presented as Lemma \ref{layers_modulo_largest}). This finishes the proof in the $d=1$ case.

\section{The \texorpdfstring{$d=2$}{d=2} case}
\label{section_d2_overview}
In this section, we prove Theorem \ref{the_main_theorem}.

We note that the special case $k=n$ can be solved in exactly the same way but with slightly different (simpler) details, as all layers behave as outer layers (defined in Definition \ref{definition_outer_d2}). To make the presentation more readable, we will not mention this special case separately in the course of the proof.

\subsection{Definitions and notation}

First, let us start with some definitions and notation, which are natural generalisations of those from Section \ref{section_d1}.

\begin{definition}
    For $x \in \{0, 1, 2\}^n$, let the \emph{type} of $x$ be a tuple $(a, b, c)$ such that $x$ has $a$ coordinates equal $0$, $b$ coordinates equal $1$ and $c$ coordinates equal $2$ (where we must have $a+b+c=n$).

    Later, we often speak about \emph{type $(a, c)$,} meaning type $(a, n-a-c, c)$.
\end{definition}

\begin{notation}
    Let us denote the multinomial coefficients as follows:
    \begin{equation*}
        \binom{n}{a_1, a_2, \dots, a_k} = \frac{n!}{a_1! \cdot \dots \cdot a_k! \cdot (n-a_1 - \dots - a_k)!},
    \end{equation*}
    for $0 \leqslant n, a_1, \dots, a_k, n-a_1-\dots -a_k$.
\end{notation}

We note that there are $\frac{n!}{a!b!c!}=\binom{n}{a,b}$ elements of type $(a, b, c)$ and that all elements of type $(a, b, c)$ lie in layer $b+2c$. We also note that two elements of the same type differ only by a permutation of the coordinates (since they have the same number of $0$s, $1$s and $2$s).

\begin{definition}
    Again, let $L_m$ denote the set of all $x \in \{0, 1, 2\}^n$ with $|x|=m$ for $0 \leqslant m \leqslant 2n$, and we call this set a \emph{layer.}

    Let $L_m$ with $m \leqslant n$ be called \emph{lower layers} and $L_m$ with $m \geqslant n$ be called \emph{upper layers.}

    The layer $L_n$ in the middle of the cube will be called the \emph{middle layer.}
\end{definition}

\begin{notation}
    For $x, y \in \{0, 1, 2\}^n$, let us write $x \leqslant y$ if and only if $x_i \leqslant y_i$ for all coordinates $1 \leqslant i \leqslant n$.

    Moreover, let us write $x \prec_j y$ if $x_i=y_i$ for all $1 \leqslant i \leqslant n, i \neq j$ and $y_j=x_j+1$. We sometimes also write $x \prec y$ if $x \prec_j y$ for some $j$. Again, we note that this relation isn't transitive.
\end{notation}

\begin{definition}
\label{basic_definition}
    We call an ordered tuple $(x_1, \dots, x_l)$ of elements of $\cube$ a \emph{chain} if $x_1 \prec x_2 \prec \dots \prec x_l$.

    We call a chain $(x_1, \dots, x_l)$ with $x_{i} \prec_{j_i} x_{i+1}$ for some $j_1, \dots, j_{l-1}$ \emph{basic} if all the following conditions are met (where $x_{i,j}$ denotes the $j^{\text{th}}$ coordinate of element $x_i$):
    \begin{itemize}
        \item for all $2 \leqslant i \leqslant l-1$ we must have either $x_{i,j_{i-1}}=2, x_{i, j_{i}}=0$ or $j_{i-1}=j_i$,
        \item $x_{1, j_1}=0$,
        \item $x_{l, j_{l-1}}=2$.
    \end{itemize}

    For a chain $x_1 \prec_{j_1} x_2 \prec_{j_2} \dots \prec_{j_{l-1}} x_l$, we call the \emph{width} of that chain the number of different elements of set $\{j_1, j_2, \dots, j_{l-1}\}$.
\end{definition}

For example, the following is a basic chain in $\{0, 1, 2\}^5$ of width $2$: $(0, 1, 1, 0, 0),$ $(0, 1, 1, 1, 0),$ $(0, 1, 1, 2, 0),$ $(1, 1, 1, 2, 0),$ $(2, 1, 1, 2, 0).$

We note that if we have a chain $(x_1, \dots, x_l)$, where $x_1 \in L_m$, then $x_2 \in L_{m+1}, \dots, x_l \in L_{m+l-1}$. Moreover, we note that if $x_i$ is of type $(a, b, c)$, then $x_{i+1}$ is either of type $(a-1, b+1, c)$ or $(a, b-1, c+1)$.

\begin{definition}
    We call a chain $(x_1, \dots, x_l)$ with $x_1 \in L_m$ (so $x_l \in L_{m+l-1}$) \emph{symmetric} if and only if $m=2n-(m+l-1)$.
\end{definition}

We note that a basic chain whose first element is of type $(a, b, c)$ is symmetric if and only if its last element is of type $(c, b, a)$.

\subsection{Plotting the chains in the cube}

To make our proofs and reasoning more intuitive, it is useful to introduce a way of plotting the layers in $\cube$. The types will be represented by unit squares in a coordinate system, where the vertical coordinate represents the type's number of $0$s ($a$) and the horizontal coordinate represents the number of $2$s ($c$). Of course, both of these coordinates start from $0$.

For example, the uppermost square on Figure \ref{figure_types} represents type $(9, 0, 0)$, the two squares below it represent types $(8, 1, 0)$ (the left one) and $(8, 0, 1)$ (the right one), etc.

All types from one layer lie on diagonals going up and right (as they have constant difference $b+2c=n+c-a$) - for example, the types forming the middle layer are marked grey on the picture.

Types along diagonals going down and right have the same value of $b$ (as they have constant $a+c$).

We note that our plot is a projection of $\{a, b, c \geqslant 0: a+b+c=n\}$ from a three-dimensional space $\mathbb{Z}^3$ onto the plane $a,c$.

\begin{figure}
    \centering
    \includegraphics[width=0.4\linewidth]{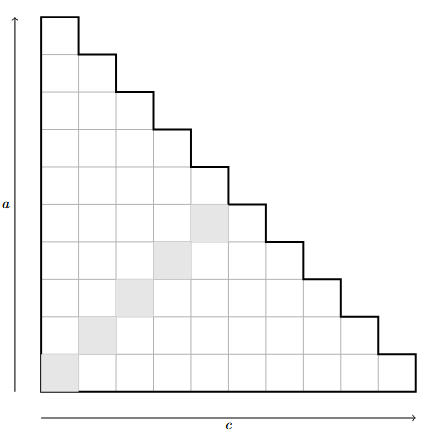}
    \caption{$n=9$}
    \label{figure_types}
\end{figure}

We will later refer to this way of plotting the types as a \emph{staircase diagram.}

\subsection{Weighted chains}

Let $A$ be any set satisfying the problem's conditions (i.e. for any $x \leqslant y$ in $A$ we must have $x_i < y_i$ for more than $k$ different coordinates $i$) and
\begin{equation}
    B = \{x: |x| \equiv n \ (\modd 2k+1) \}.
\end{equation}

We note that $B$ satisfies the problem's condition, and that any set satisfying the condition can have at most one element in any chain of width $k$ or smaller.

Let
\begin{multline}
    \mathcal{C} = \Big\{ \gamma : \gamma \text{ is a basic chain in $\cube$ of width $k$ and length $2k+1$} \Big\} \ \cup\\
    \Big\{ \gamma : \gamma \text{ is a basic, symmetric chain in $\cube$ of width at most $k$ and length less than $2k+1$} \Big\}
\end{multline}
(where \emph{basic} chains were defined in Definition \ref{basic_definition}).
We note that $\mathcal{C}$ contains singleton chains with all elements of the middle layer. We also note that $B$ has exactly one element in all chains in $\mathcal{C}$.

Moreover, we note that each point is an end of at least one chain, whose other end is not closer to the middle (this fact will be important later when assigning weights recursively).

\begin{lemma}
\label{b11111_can_be_0}
    If there exist strictly positive weights $w_{\gamma}$ for all $\gamma \in \mathcal{C}$ (except for $\gamma=((1, 1, \dots, 1))$, which is allowed to have a weight equal to $0$), such that for any $x \in \cube$
    \begin{equation*}
        w_x := \sum_{\substack{\gamma \in \mathcal{C}: \ x \in \gamma}} w_{\gamma} = 1,
    \end{equation*}
    then the set $B$ is the unique maximal set in $\cube$ satisfying the conditions.
\end{lemma}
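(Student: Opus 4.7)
The plan is to mirror the weighted counting argument from the $d=1$ case. Each $\gamma \in \mathcal{C}$ has width at most $k$, so any admissible $A$ meets $\gamma$ in at most one point, whereas $B$ meets $\gamma$ in exactly one point. Combining this with the hypothesis $w_x = 1$ gives
\begin{equation*}
|A| = \sum_{a \in A} w_a = \sum_{\gamma \in \mathcal{C}} w_\gamma\,|A \cap \gamma| \leqslant \sum_{\gamma \in \mathcal{C}} w_\gamma = \sum_{b \in B} w_b = |B|,
\end{equation*}
so $B$ is one of the maximum sets.

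For uniqueness, suppose $|A| = |B|$. Since $|A \cap \gamma| \leqslant 1$ for every $\gamma \in \mathcal{C}$, equality forces $|A \cap \gamma| = 1$ whenever $w_\gamma > 0$. The positive-weight singleton chains $((x))$ for $x \in L_n \setminus \{(1,\ldots,1)\}$ therefore force $L_n \setminus \{(1,\ldots,1)\} \subseteq A$. To force $(1,\ldots,1) \in A$ itself, I would apply the same equality to the basic symmetric chain $\gamma_0 = ((0,1,\ldots,1),\,(1,\ldots,1),\,(2,1,\ldots,1))$, which lies in $\mathcal{C}$ with positive weight whenever $k \geqslant 1$. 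Its two endpoints each differ on a single coordinate from an element of $L_n \setminus \{(1,\ldots,1)\}$ — namely $(0,2,1,\ldots,1)$ and $(2,0,1,\ldots,1)$ — both of which already belong to $A$, so neither endpoint can be in $A$. Hence $\gamma_0$ meets $A$ at its middle, giving $L_n \subseteq A$. (When $k = 0$ the hypothesis forces every singleton weight, including $w_{((1,\ldots,1))}$, to equal $1$, so $A = \cube = B$ follows immediately.)

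Next I would peel layers outward in blocks of size $2k+1$. A short type-count shows that each $y \in L_i$ with $n - 2k \leqslant i \leqslant n - 1$ lies below some $z \in L_n$ differing on only $\lceil (n-i)/2 \rceil \leqslant k$ coordinates (using that $y$ has at least $n-i$ zero coordinates), so $z \in L_n \subseteq A$ forces $y \notin A$; the symmetric argument excludes $L_{n+1}, \ldots, L_{n+2k}$ from $A$. Now every $x \in L_{n-(2k+1)}$ has at least $2k+1$ zero coordinates, so $x$ extends to a basic chain in $\mathcal{C}$ of width $k$ and length $2k+1$ terminating in $L_{n-1}$; this chain must meet $A$, and since all of its non-initial elements lie in the excluded layers, $x \in A$. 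Hence $L_{n-(2k+1)} \subseteq A$, and symmetrically $L_{n+(2k+1)} \subseteq A$. Iterating this two-step recovery — first exclude the next $2k$ intermediate layers on each side using the most recently reclaimed layer of $B$ as the anchor, then use chains of $\mathcal{C}$ of length $2k+1$ to force the next layer of $B$ into $A$ — yields $L_{n \pm j(2k+1)} \subseteq A$ for every valid $j$, so $B \subseteq A$ and $A = B$.

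The principal obstacle is the exceptional singleton $((1,\ldots,1))$: because its weight may vanish, one cannot deduce $(1,\ldots,1) \in A$ directly from the chain equality and must instead exhibit a different positive-weight chain through it whose other members are ruled out by specific $L_n$-neighbours. Once that special case is dealt with, the remaining work — verifying existence of the extending basic chains at each layer, tracking how many coordinates of each value remain available, and confirming the intermediate-layer exclusion — is routine bookkeeping carried out identically at each iterative step.
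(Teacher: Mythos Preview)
Your proposal is correct and follows essentially the same approach as the paper: the weighted double-counting inequality for $|A|\leqslant|B|$, then forcing $L_n\subseteq A$, then peeling outward in blocks of $2k+1$ layers via chains in $\mathcal{C}$. The one point where you diverge is how you force $(1,\ldots,1)\in A$: the paper argues by maximality (any forbidden neighbour of $(1,\ldots,1)$ is already a forbidden neighbour of some type-$(1,n-2,1)$ element of $L_n$, so $(1,\ldots,1)$ can be added), whereas you instead exhibit the positive-weight chain $\gamma_0=((0,1,\ldots,1),(1,\ldots,1),(2,1,\ldots,1))\in\mathcal{C}$ and rule out its endpoints via their $L_n$-neighbours $(0,2,1,\ldots,1)$ and $(2,0,1,\ldots,1)$ --- a clean alternative that stays entirely inside the chain framework and works for all $n\geqslant 2$.
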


First we will prove the lemma, and in the rest of the section we will show that such an assignment of weights exists. In the next subsection, we define weights as required in the lemma, and in the further three subsections we show that such weights are indeed non-negative.

\begin{remark}
    We note that, at the first sight, it is by no means obvious that taking only basic chains should be enough to make such an assignment of weights possible (there are many non-basic chains of width $k$ with exactly one element of $B$, for example a chain passing through elements of types: $(a, b, c),$ $(a-1, b+1, c),$ $(a-2, b+2, c),$ $\dots,$ $(a-k, b+k, c),$ $(a-k, b+k-1, c+1),$ $\dots,$ $(a-k, b, c+k)$). We noticed that the basic chains should be enough after making simulations in Python. Since taking as few chains as possible (as long as they allow us an appropriate assignment of weights) appears to make the analysis easier, we decided to just take the basic chains.
\end{remark}

\begin{proof}
    First, we will show that $B$ is the largest possible such set, and then we will show that no other set of equal size satisfies the conditions.

    Assuming that a specified assignment of weights is possible, we can easily finish the proof by observing (precisely as in the case $d=1$) that we have:
    \begin{multline}
        |A| = \sum_{a \in A} 1 = \sum_{a \in A} w_a = \sum_{a \in A} \sum_{\substack{\gamma \in \mathcal{C}: \\ a \in \gamma}} w_{\gamma} = \sum_{\substack{\gamma \in \mathcal{C}: \\ A \cap \gamma \neq \emptyset}} w_{\gamma} \\ \leqslant \sum_{\substack{\gamma \in \mathcal{C}}} w_{\gamma} = \sum_{\substack{\gamma \in \mathcal{C}: \\ B \cap \gamma \neq \emptyset}} w_{\gamma} = \sum_{b \in B} \sum_{\substack{\gamma \in \mathcal{C}: \\ b \in \gamma}} w_{\gamma} = \sum_{b \in B} w_b = |B|.
    \end{multline}
    Now, let us show how the weights of all chains (except for the singleton chain consisting of the point $(1, 1, \dots, 1)$) being positive imply the uniqueness of $B$ as the optimal set. Let us suppose that $|B'|=|B|$ for another set $B'$ satisfying the condition.
    
    Since $B'$ must contain exactly one element from each chain in $\mathcal{C}$ with positive weight, in particular it needs to contain all elements of the middle layer, except possibly $(1, 1, \dots, 1)$ (as $\mathcal{C}$ contains singleton chains consisting of these). However, this means that $B'$ must also contain $(1, 1, \dots, 1)$, as it is maximal and for any element $x \in \cube$ such that $x \leqslant (1, 1, \dots, 1)$ and $x$ with at most $k$ coordinates different from $1$, we also have $x \leqslant y$ for some $y$ from the middle layer of type $(1, n-2, 1)$. Hence, $L_n \subseteq B'$.
    
    Now similarly to the even case for $d=1$, for any $x \in L_{n-2k-1}$, there exists a chain in $\mathcal{C}$ starting at $x$ and finishing at $L_{n-1}$. $B'$ must contain an element of this chain, but it cannot contain any of its elements other than $x$, as this would violate the condition. Hence, $x \in B'$ and so $L_{n-2k-1} \subseteq B'$. We can repeat this reasoning, ultimately finding that $B \subseteq B'$, which, together with $|B'| = |B|$, shows the uniqueness of $B$.
\end{proof}

\subsection{Assigning weights}
\label{assigning_weights}

\begin{definition}
\label{definition_outer_d2}
    We call layers $L_m$ \emph{lower} if $m \leqslant n$ and \emph{upper} if $m \geqslant n$.

    Let us call a layer $L_m$ \emph{outer} if either $m \leqslant n-k$ or $m \geqslant n+k$ and \emph{inner} otherwise. Hence, a layer is outer if and only if there are no chains in $\mathcal{C}$ of length $2k+1$ starting on the other side of the cube that pass through it.

    We call a type outer/inner/lower/upper if the layer it belongs to is such. We note that type $(a, b, c)$ is lower if $a \geqslant c$ and upper if $a \leqslant c$.
\end{definition}

\begin{definition}
    Similarly to earlier, we say that a chain $(x_1, \dots, x_l)$ \emph{starts at $x_1$} if $x_1$ is at least as far from the middle layer as $x_l$, and that it \emph{starts at $x_l$} if $x_l$ is at least as far from the middle layer as $x_1$.

    If the chain is symmetric, we can say that is starts at either at $x_1$ or $x_l$.
\end{definition} 

We note that all chains in $\mathcal{C}$ starting at one type $(a, b, c)$ go through the same types - for example, if $(a, b, c)$ is an outer lower layer, these are: $(a, b, c),$ $(a-1, b+1, c),$ $(a-1, b, c+1),$ $(a-2, b+1, c+1),$ $(a-2, b, c+2),$ $\dots,$ $(a-k, b+1, c+k-1),$ $(a-k, b, c+k).$

\begin{figure}
    \centering
    \includegraphics[width=0.4\linewidth]{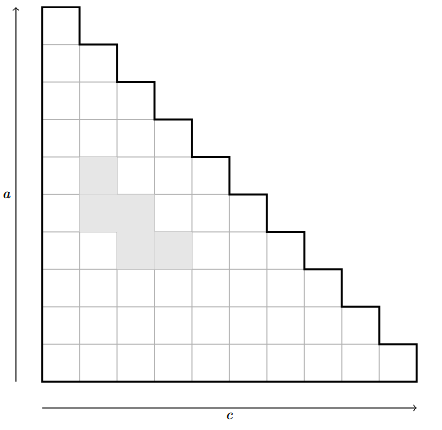}
    \caption{$n=9, k=2$}
    \label{figure_chain}
\end{figure}

For example, Figure \ref{figure_chain} shows on a staircase diagram the types through which all chains in $\mathcal{C}$ starting at type $(5, 3, 1)$ pass (for $n=9, k=2$).

If $(a, b, c)$ is an inner layer and $b+2c=n-m$, $m<k$, then all chains starting at $(a, b, c)$ in $\mathcal{C}$ (which must be symmetric) go through the types: $(a, b, c),$ $(a-1, b+1, c),$ $(a-1, b, c+1),$ $(a-2, b+1, c+1),$ $(a-2, b, c+2),$ $\dots,$ $(a-m, b+1, c+m-1),$ $(a-m, b, c+m).$

To all chains in $\mathcal{C}$ starting at the same type we will assign the same weight. Let $W(a, b, c)$ be the total weight of all chains starting at $(a, b, c)$. The assignment of equal weights to all chains starting at the same type ensures that all points from the same type have equal weight. We will sometimes write $W(a, c)$ for $W(a, n-a-c, c)$.

We will assign the weights recursively, starting by setting $W(n, 0, 0)=W(0, 0, n)=1$ and then proceeding from the types furthest from the middle towards the ones closest (for two types in the same layer, it does not matter which one would get the weight assigned first, and both orders will lead to the same weights).

We note that due to starting at the furthest types and moving towards the middle, at the time we need to assign the weight to chains starting at $(a, b, c)$, the weights of all other chains passing through any point of that type are assigned, so we can simply set
\begin{equation}
    W(a, b, c) = \binom{n}{a, b} - \sum_{\substack{\gamma \in \mathcal{C}: \ x \in \gamma \\
    \text{ for some $x \in (a, b, c)$},\\
    \ \gamma \ \text{not starting at $x$}}} w_{\gamma} = \binom{n}{a, b} - \sum_{\substack{(a', b', c'): \text{ chains starting at } \\ \text{$(a', b', c')$ pass through $(a, b, c)$}, \\ (a', b', c') \neq (a, b, c)}} W(a', b', c').
\end{equation}
This ensures that for any type $(a, b, c)$ the total induced weight of points of this type is $\binom{n}{a, b}$, so the induced weight of any point of this type is $1$. We now need to show that the weights set in this way are all non-negative. In the following subsection, we show this for outer layers, then we present a few useful formulae and in the final subsection we show this for inner layers.

\subsection{Weights of chains starting in outer layers are positive}
\label{section_d2_outer}

The result from this subsection is strictly easier to prove than the one from the next subsection about inner layers, and the observations from this section will be crucial for the next section.

\begin{notation}
    Below we often speak about \emph{chains starting at $(a, b, c)$} or $W(a, b, c)$, without checking whether $a, b, c \geqslant 0$, applying a convention that when one of $a, b$ or $c$ is negative, there are no chains starting at $(a, b, c)$ and $W(a, b, c)=0$.
\end{notation}

Due to symmetry ($W(a, c)=W(c, a)$), it suffices to prove the result for the chains starting at lower levels. We recall that through these types pass only chains starting at lower types further from the middle (and no chains starting at upper types). Hence, in the rest of this subsection, we will assume that $(a, c)$ is a lower type. In the next short subsection we will show analogous results for upper types.

Our key observation is that the chains passing through types $(a, c)$ and $(a+1, c-1)$ are identical, except for:
\begin{itemize}
    \item The chains starting at $(a, c)$ (as they do not pass through $(a+1, c-1)$.
    \item The chains starting at $(a+1+k, c-1-k)$ and $(a+1+k, c-k)$ (as they reach $(a+1, c-1)$ but not $(a, c)$).
\end{itemize}
Hence, we have:
\begin{equation}
\label{key_observation}
    W(a, c) = \binom{n}{a, c} - \binom{n}{a+1, c-1} - W(a+1, c) \\
    + W(a+1+k, c-1-k) + W(a+1+k, c-k).
\end{equation}
It might be helpful to visualise the formula above on the staircase diagram. On Figure \ref{key_formula_visualisation}, we can see (for $n=13$, $k=2$) which types contribute, according to Equation \ref{key_observation}, to $W(6, 3)$ (which is the type represented by the lower blue cell). Blue cells mean a contribution of the size of the corresponding type and grey cells represent the contribution of the $W$ of the corresponding type. The cells also contain signs $+/-$, according to the sign of the contribution of that cell.

\begin{figure}
    \centering
    \includegraphics[width=0.5\linewidth]{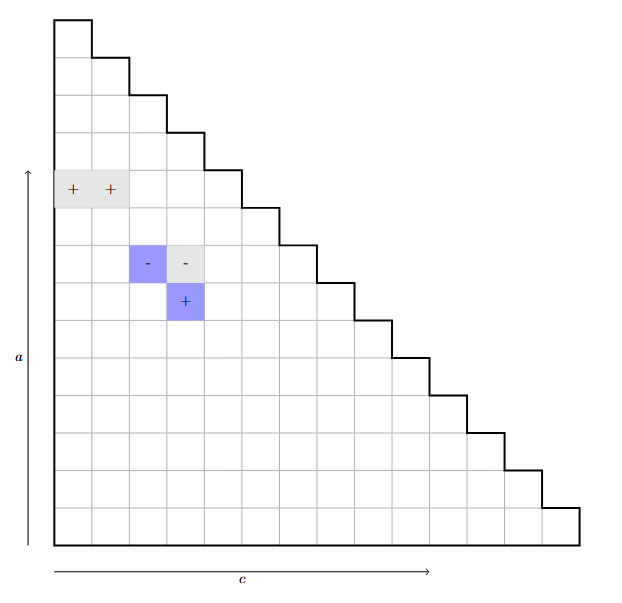}
    \caption{$n=13, k=2$. Contributions for $W(6, 3)$ according to Equation \ref{key_observation}.}
    \label{key_formula_visualisation}
\end{figure}

Our strategy for proving that $W(a, c)>0$ is to unfold the recursive formula for $W(a, c)$ in an appropriate order, so that ultimately we are able to express $W(a, c)$ as a sum of positive terms. It is motivated by tracking unfolding the formulas on a staircase diagram.  

Our first step is to apply the recursive formula to all terms not involving $k$ (so first to $W(a+1, c)$, then to $W(a+2, c)$ that will appear, etc.). After unfolding all such terms, we get:
\begin{multline}
\label{eq_wac_first_unfolded}
    W(a, c) = \sum_{i \geqslant a} \bigg( \binom{n}{i, c} - \binom{n}{i+1, c-1} + W(i+1+k, c-1-k) + W(i+1+k, c-k) \bigg) \cdot (-1)^{a-i} \\ =
    \sum_{i \geqslant a} \binom{n}{i, c} (-1)^{i-a} - \sum_{i \geqslant a} \binom{n}{i+1, c-1} (-1)^{i-a} \\+ \sum_{i \geqslant a} W(i+1+k, c-1-k) (-1)^{i-a} + \sum_{i \geqslant a} W(i+1+k, c-k) (-1)^{i-a}
    \\ = \sum_{i \geqslant a} \binom{n}{i, c} (-1)^{i-a} - \sum_{i \geqslant a+1} \binom{n}{i, c-1} (-1)^{i-(a+1)}\\ + \sum_{i \geqslant a+1+k} W(i, c-1-k) (-1)^{i-(a+1+k)} + \sum_{i \geqslant a+1+k} W(i, c-k) (-1)^{i-(a+1+k)}.
\end{multline}
We can visualise these contributions on a staircase diagram. Figure \ref{step_1_visualisation} shows the contributions for $W(6, 3)$ for $n=13, k=2$. As on Figure \ref{key_formula_visualisation}, blue cells contribute the size of the corresponding type, grey ones contribute the $W$ of the corresponding type, and the sign in a cell represents the sign of a contribution.

We note that usually, the blue and grey vertical segments would be further from each other. Here they neighbour due to very small $k=2$. 

\begin{figure}
    \centering
    \includegraphics[width=0.5\linewidth]{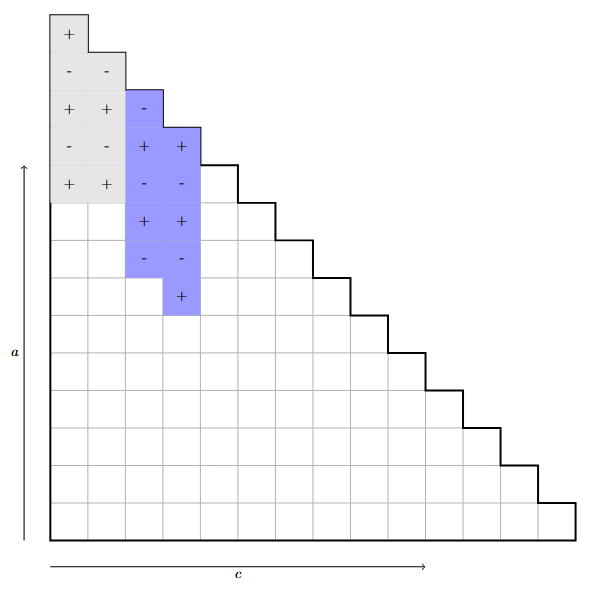}
    \caption{$n=13, k=2$. Contributions to $W(6, 3)$ according to Equation \ref{eq_step_1}.}
    \label{step_1_visualisation}
\end{figure}

Since sums of the form as above will appear very often, let us define notation for them:

\begin{notation}
    Let us denote
    \begin{equation*}
        S(a, c) = \sum_{i \geqslant a} \binom{n}{i, c} (-1)^{i-a}
    \end{equation*}
    and
    \begin{equation*}
        R(a, c) = \sum_{i \geqslant a} W(i, c) (-1)^{i-a}.
    \end{equation*}
\end{notation}

On a staircase diagram, $S(a, c)$ is a vertical segment of blue cells (starting at $(a, c)$) with interchanging $+$s and $-$s. $R(a, c)$ is a similar segment, but with grey cells.

Applying it to the previous formula for $W(a, c)$, we get:
\begin{equation}
\label{eq_step_1}
    W(a, c) = S(a, c) - S(a+1, c-1) + R(a+1+k, c-1-k) + R(a+1+k, c-k).
\end{equation}

Our aim will be to show that $S(a, c) - S(a+1, c-1)>0$ and $R(a, c) \geqslant 0$ for any $a>c$. Let us first focus on $R$.

Let us start by unfolding it further, using Equation \ref{eq_step_1}:
\begin{multline}
    R(a, c) = \sum_{i \geqslant a} W(i, c) (-1)^{i-a}\\ = \sum_{i \geqslant a} \Big( S(i, c) - S(i+1, c-1) + R(i+1+k, c-1-k) + R(i+1+k, c-k) \Big)\\
    = \sum_{i \geqslant a} S(i, c) - \sum_{i \geqslant a+1} S(i, c-1) + \sum_{i \geqslant a+1+k} R(i, c-1-k) + \sum_{i \geqslant a+1+k} R(i, c-k)\\
    = \sum_{i \geqslant a} i \cdot \binom{n}{i, c} (-1)^{i-a} - \sum_{i \geqslant a+1} i \cdot \binom{n}{i, c-1} (-1)^{i-(a+1)} \\+ \sum_{i \geqslant a+1+k} i \cdot W(i, c-1-k) (-1)^{i-(a+1+k)} + \sum_{i \geqslant a+1+k} i \cdot W(i, c-k) (-1)^{i-(a+1+k)}.
\end{multline}
The sums above are very similar to the $R(a, c), S(a, c)$ sums, but have different coefficients.
\begin{notation}
    Let us recall the definition of figurate numbers: $P_0(i)=1_{i>0}$ and for $d>0$:
    \begin{equation*}
        P_d(i)=\sum_{1 \leqslant i \leqslant d} P_{d-1}(i)
    \end{equation*}
    (so, for example, $P_1(i)=i$). They also have a closed form: $P_d(i)=\binom{i+d-1}{d}$.

    Let us now define:
    \begin{equation*}
        S(d, a, c)=\sum_{i \geqslant a} P_d(i) \binom{n}{i, c} (-1)^{i-a}
    \end{equation*}
    and:
    \begin{equation*}
        R(d, a, c) = \sum_{i \geqslant a} P_d(i) W(i, c) (-1)^{i-a}.
    \end{equation*}
    In particular, $S(0, a, c)=S(a, c)$ and $R(0, a, c)=R(a, c)$.
\end{notation}
Hence, according to the above, we have:
\begin{equation}
    R(a, c)=S(1, a, c) - S(1, a+1, c-1) + R(2, a+1+k, c-1-k)+ R(2, a+1+k, c-k).
\end{equation}
We can analogously prove that in general, we have:
\begin{equation}
\label{eq_step_2}
    R(d, a, c)=S(d, a, c) - S(d, a+1, c-1) + R(d+1, a+1+k, c-1-k)+ R(d+1, a+1+k, c-k).
\end{equation}
We can visualise this equation on a staircase diagram as Figures \ref{step_2_visualisation_before} and \ref{step_2_visualisation_after} show. The first of these figures visualises $R(d, 6, 4)$ for $n=16$ and $k=2$ (where numbers in cells denote how many times these cells contribute). The second figure shows, what we get after applying Equation \ref{eq_step_2}.

\begin{figure}[h!]
    \centering
    \begin{minipage}{0.5\textwidth}
        \centering
        \includegraphics[scale=0.3]{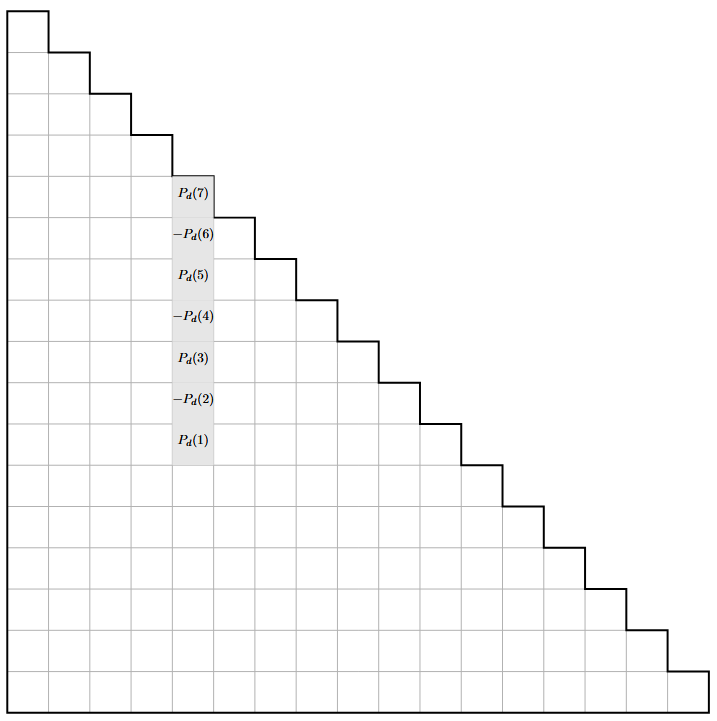}
        \caption{$n=16, k=2: R(d, 6, 4)$}
        \label{step_2_visualisation_before}
    \end{minipage}\hfill
    \begin{minipage}{0.5\textwidth}
        \centering
        \includegraphics[scale=0.3]{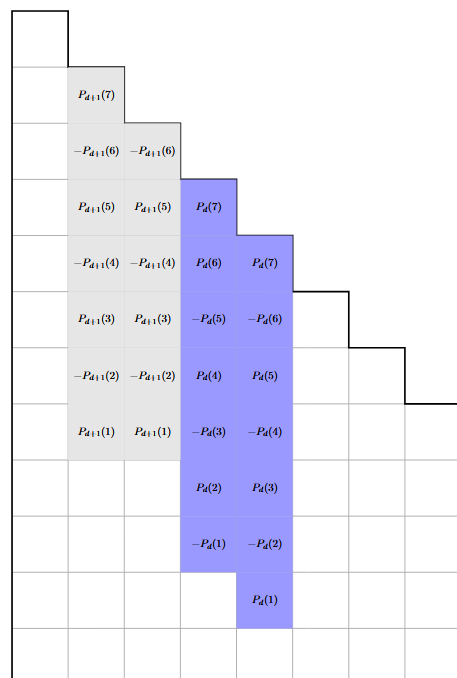}
        \caption{$n=16, k=2:$ $R(d, 6, 4)$ after applying Equation \ref{eq_step_2} (only upper-left part visible).}
        \label{step_2_visualisation_after}
    \end{minipage}
\end{figure}

Combining Equations \ref{eq_step_1} and \ref{eq_step_2}, we get that in order to show that $W(a, c)>0$ for $(a, n-a-c, n)$ a lower outer type, it suffices to show that $S(d, a, c) \geqslant S(d, a+1, c-1)$ with a strict inequality for $d=0$ (as $W(a, c)$ is a sum of such expressions).

To do so, we will first find a closed form for $S(d, a, c)$.

\begin{lemma}
\label{s_closed_form}
    For any $d, a, c \geqslant 0$ we have:
    \begin{equation*}
        S(d, a, c) = \binom{n}{c} \cdot \binom{n-c-d-1}{n-a-c}
    \end{equation*}
\end{lemma}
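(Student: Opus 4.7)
The plan is to reduce the alternating sum to Vandermonde's convolution after peeling off the $c$-dependence in the multinomial. Using $\binom{n}{i, c} = \binom{n}{c}\binom{n - c}{i}$ and writing $m := n - c$, I would shift the index via $j := i - a$ to rewrite
\begin{equation*}
S(d, a, c) = \binom{n}{c}\sum_{j \geqslant 0}\binom{j + d}{d}\binom{m}{j + a}(-1)^{j}.
\end{equation*}
The coefficient in $j$ is the shifted figurate $\binom{j + d}{d}$, as one verifies by tracing through the unfoldings that produced $R(d, \cdot, \cdot)$ and $S(d, \cdot, \cdot)$ above (the successive applications of the alternating-tail operator bump up the degree of the polynomial factor by one each time, starting from the constant $1$).

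The main step is the upper-negation identity $(-1)^j \binom{j + d}{d} = \binom{-d - 1}{j}$ combined with Vandermonde's convolution. Letting $\ell := j + a$,
\begin{equation*}
\sum_{j \geqslant 0}\binom{-d-1}{j}\binom{m}{j + a} = \sum_{\ell \geqslant a}\binom{-d-1}{\ell - a}\binom{m}{m - \ell} = \binom{m - d - 1}{m - a}.
\end{equation*}
Substituting $m = n - c$ yields $\binom{n}{c}\binom{n - c - d - 1}{n - a - c}$, which is the claim.

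An alternative proof that avoids negative binomial coefficients is induction on $d$. The base case $d = 0$ is the classical telescoping identity $\sum_{i \geqslant a}\binom{m}{i}(-1)^{i - a} = \binom{m - 1}{a - 1}$, itself immediate from Pascal's rule applied inductively. For the inductive step, Pascal's rule on $\binom{j + d}{d} = \binom{j + d - 1}{d - 1} + \binom{j + d - 1}{d}$ gives the recursion $S(d, a, c) + S(d, a + 1, c) = S(d - 1, a, c)$, and the proposed closed form satisfies the same recursion via Pascal on $\binom{n - c - d}{n - a - c} = \binom{n - c - d - 1}{n - a - c} + \binom{n - c - d - 1}{n - a - c - 1}$; a backwards induction on $a$, with the trivial base $n - a - c < 0$ where both sides vanish, then closes the loop.

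The main obstacle, such as it is, is really just identifying the polynomial coefficient in $j$ correctly after the shift (that is, confirming that $d$ unfoldings produce $\binom{j+d}{d}$ rather than some other polynomial of degree $d$); once that is done, the identity reduces to a one-line Vandermonde manipulation or a two-line Pascal-recursion induction, with no further combinatorial input needed.
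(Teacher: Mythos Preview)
Your proof is correct. The paper gives no details beyond ``a very simple induction on $a+d$'', so your second approach---the Pascal recursion $S(d,a,c)+S(d,a+1,c)=S(d-1,a,c)$ together with a backwards induction on $a$---is essentially the paper's argument. Your first approach via upper negation $(-1)^j\binom{j+d}{d}=\binom{-d-1}{j}$ and Vandermonde is a genuinely more direct alternative: it collapses the double induction into a single convolution, at the mild cost of working with negative upper binomials. You are also right to be cautious about the coefficient: the paper's literal $P_d(i)=\binom{i+d-1}{d}$ does not yield the stated closed form (already for $d=1$ one gets an extra factor of $n-c$), and it is precisely the shifted coefficient $\binom{i-a+d}{d}$---which is what the successive unfoldings actually produce---that makes the lemma true. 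So your ``main obstacle'' is a real one, and you resolved it correctly.
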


The proof of the lemma is a very simple induction on $a+d$.

Now we would like to show that $S(d, a, c)-S(d, a+1, c-1)\geqslant 0$ with a strict inequality for $d=0$. This is not true in general. However, we can note that at any time $S(d, a, c) - S(d, a+1, c-1)$ appears as we continue to unfold the recursive definition for $W(a, c)$, the invariant $a \geqslant c+d$ holds.

Indeed, since we start with $d=0$ and $a \geqslant c=c+d$ and whenever we use Equation \ref{eq_step_2} to unfold $R$, the value of $d$ in the next $R$ increases by $1$, and the value of $a$ increases by $1+k>1$, the invariant $a>c+d$ will remain true. Hence, it suffices to check the following, trivial lemma.

\begin{lemma}
\label{claim_s_difference_positive}
    For any $d, a, c \geqslant 0$ with $a \geqslant c + d$, we have
    \begin{equation*}
        S(d, a, c) > S(d, a+1, c-1).
    \end{equation*}
\end{lemma}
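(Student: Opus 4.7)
The plan is to apply Lemma \ref{s_closed_form} to both sides and reduce the claim to an elementary inequality between binomial products. Substituting, we have
\begin{equation*}
  S(d, a, c) = \binom{n}{c}\binom{n-c-d-1}{n-a-c}, \qquad S(d, a+1, c-1) = \binom{n}{c-1}\binom{n-c-d}{n-a-c}.
\end{equation*}
In the generic case $c \geq 1$, I would form the ratio and simplify via $\binom{n}{c}/\binom{n}{c-1} = (n-c+1)/c$ together with $\binom{n-c-d-1}{n-a-c}/\binom{n-c-d}{n-a-c} = (a-d)/(n-c-d)$ (the latter being $\binom{m-1}{j} = \binom{m}{j}(m-j)/m$ applied with $m = n-c-d$ and $j = n-a-c$, so $m-j = a-d$). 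The claim then reduces to the single inequality
\begin{equation*}
  (n-c+1)(a-d) > c(n-c-d).
\end{equation*}

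The main step is to verify this inequality under the invariant $a \geq c+d$. Viewed as an affine function of $a$ with slope $n-c+1 > 0$, the left-hand side attains its minimum over $a \geq c+d$ at the boundary value $a = c+d$, where it equals $c(n-c+1)$; this exceeds the right-hand side $c(n-c-d)$ by exactly $c(d+1) \geq c \geq 1$. Hence the strict inequality holds for every $a \geq c+d$ whenever $c \geq 1$.

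The case $c = 0$ needs a separate remark: here $S(d, a+1, -1) = 0$ by our convention on negative binomial arguments, and $S(d, a, 0) = \binom{n-d-1}{n-a}$ is positive precisely when $a \geq d+1$. The invariant $a \geq c+d$ alone would permit the degenerate value $a = d$, but the discussion preceding the lemma already observes that each application of Equation \ref{eq_step_2} strictly increases $a-c-d$ (by $2k$ or $2k+1$), starting from $a - c \geq k \geq 1$ at $d = 0$ for a lower outer type, so this degenerate situation never actually occurs in the recursive unfolding of $W(a,c)$. I do not foresee any real obstacle: the proof is essentially a one-line ratio computation together with this small boundary check, matching the paper's description of the lemma as trivial.
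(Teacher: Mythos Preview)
Your argument is correct and is exactly the natural route: the paper does not actually supply a proof of this lemma (it merely calls it ``trivial'' after stating the closed form in Lemma~\ref{s_closed_form}), and your computation via the ratio of the two closed-form expressions is the expected verification. Your observation about the degenerate boundary $c=0$, $a=d$ is also apt: as literally stated, the lemma would assert $0>0$ there, but the paper's preceding paragraph already guarantees the stronger invariant $a-c-d\geqslant k\geqslant 1$ throughout the unfolding (starting from an outer lower type with $a-c\geqslant k$ and gaining $2k$ or $2k+1$ at each step, as you compute), so the boundary never occurs and the application is unaffected.
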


\subsection{Recursive equations for upper types}

In this brief subsection, we show (without proofs, as these are completely analogous and are easiest seen on a 'staircase plot') recursive equations for outer upper types, analogous to the ones we found in Subsection \ref{section_d2_outer} for outer lower types. In the rest of this subsection, we let $(a, c)$ be an outer upper type.

The main recursive formula for $W(a, c)$, analogous to Equation \ref{key_observation}, is the following:
\begin{equation}
\label{key_observation_prime}
    W(a, c) = \binom{n}{a, c} - \binom{n}{a-1, c+1} - W(a, c+1) + W(a - 1 - k, c + 1 + k) + W(a - k, c + 1 + k).
\end{equation}
Analogously to $S, R$ (which are sums over vertical segments in a staircase plot), we will define $S', R'$ (sums over horizontal segments).
\begin{notation}
    Let
    \begin{equation*}
        S'(d, a, c) = \sum_{i \geqslant c} P_d(i) \binom{n}{a, i} (-1)^{i-c}
    \end{equation*}
    and
    \begin{equation*}
        R'(d, a, c) = \sum_{i \geqslant c} P_d(i) W(a, i) (-1)^{i-c},
    \end{equation*}
    for $P_d(i)$ figurate numbers.
\end{notation}
Using Equation \ref{key_observation_prime}, we can write $W(a, c)$ as follows (analogously to Equation \ref{eq_step_1}):
\begin{equation}
\label{eq_step_1_prime}
    W(a, c) = S'(0, a, c) - S'(0, a-1, c+1) + R'(0, a-1-k, c+1+k) + R'(0, a-k, c+1+k).
\end{equation}
Analogously to the recursive formula from Equation \ref{eq_step_2}, we have
\begin{equation*}
\label{eq_step_2_prime}
    R'(d, a, c) = S'(d, a, c) - S'(d, a-1, c+1) + R'(d+1, a-1-k, c+1+k) + R'(d+1, a-k, c+1+k).
\end{equation*}
Finally, analogously to Lemma \ref{s_closed_form}, we also have a closed form for $S'$:
\begin{lemma}
\label{s_closed_form_prime}
    We have
    \begin{equation*}
        S'(d, a, c) = \binom{n}{a} \binom{n-a-d-1}{n-a-c}.
    \end{equation*}
\end{lemma}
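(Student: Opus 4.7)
The plan is to exploit the symmetry of the multinomial coefficient under exchange of two of its arguments, which immediately reduces the claim to Lemma \ref{s_closed_form}. Observe that by definition
\begin{equation*}
    \binom{n}{a, i} = \frac{n!}{a! \, i! \, (n-a-i)!} = \binom{n}{i, a}.
\end{equation*}
Substituting this into the definition of $S'$, I get
\begin{equation*}
    S'(d, a, c) = \sum_{i \geqslant c} P_d(i) \binom{n}{a, i} (-1)^{i-c} = \sum_{i \geqslant c} P_d(i) \binom{n}{i, a} (-1)^{i-c} = S(d, c, a),
\end{equation*}
where the last equality is just the definition of $S$ with the roles of its first two arguments swapped.

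Now I apply Lemma \ref{s_closed_form} with $a$ and $c$ interchanged: it yields
\begin{equation*}
    S(d, c, a) = \binom{n}{a} \binom{n-a-d-1}{n-c-a} = \binom{n}{a} \binom{n-a-d-1}{n-a-c},
\end{equation*}
which is exactly the claimed formula for $S'(d, a, c)$.

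No obstacle is expected since the whole proof is a two-line computation once one notices that $S$ and $S'$ are the same object with the last two arguments of the multinomial swapped. If one preferred a direct argument, an induction on $c+d$ mirroring the proof of Lemma \ref{s_closed_form} would also work; the base case is the single-term sum at $c = n-a$, and the inductive step uses the figurate-number identity $P_d(i) = P_{d-1}(i) + P_d(i-1)$ together with telescoping of the alternating sum. But the symmetry argument above is the cleanest route and avoids repeating the earlier calculation verbatim.
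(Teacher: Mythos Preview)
Your proof is correct. The paper does not spell out a proof of this lemma; it simply declares the result ``completely analogous'' to Lemma~\ref{s_closed_form}, implicitly inviting the reader to rerun the induction on $c+d$. Your symmetry observation $S'(d,a,c)=S(d,c,a)$, which follows immediately from $\binom{n}{a,i}=\binom{n}{i,a}$, is a cleaner route: it reduces the statement to the already-proved Lemma~\ref{s_closed_form} in one line and avoids repeating the inductive calculation. Both approaches are valid, but yours makes the underlying $a\leftrightarrow c$ duality between the lower and upper formulae explicit rather than leaving it as a parallel computation.
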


\subsection{Weights of chains starting at inner layers are positive}
\label{section_d2_inner}

Let us define
\begin{definition}
\label{x_definition}
    Let us define $U(a, c)$ to be $0$ if $a, c$ or $ n-a-c<0$, and otherwise, for $a \geqslant c$, recursively as:
    \begin{equation*}
        U(a, c)=\binom{n}{a, c} - \binom{n}{a+1, c-1} - U(a+1, c) + U(a+1+k, c-1-k) + U(a+1+k, c-k).
    \end{equation*}
    For $a<c$, we define $U(a, c)=U(c, a)$.
\end{definition}
    
Since this definition is exactly the same as the recursive definition for $W(a, c)$ from Equation \ref{key_observation}, we note that for $(a, c)$ an outer type we have $W(a, c)=U(a, c)$.

The following lemma expresses $W(a, c)$ for an inner type in terms of $U$ (the reader should note an analogy with the $d=1$ case).

\begin{lemma}
    For $(a, c)$ an inner lower type, we have
    \begin{equation*}
        W(a, c)=U(a, c)-U(a-k, c+k),
    \end{equation*}
    and for $(a, c)$ an inner upper type ,we analogously have
    \begin{equation*}
        W(a, c)=U(a, c)-U(a+k, c-k).
    \end{equation*}
\end{lemma}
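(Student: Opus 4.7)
By the symmetries $W(a, c) = W(c, a)$ and $U(a, c) = U(c, a)$, the upper statement reduces to the lower one, so I focus on the inner lower case and proceed by induction on $m := a - c \in \{0, 1, \ldots, k - 1\}$, starting from the outermost inner layer $m = k - 1$ and moving towards the middle $m = 0$---in direct analogy with the $d = 1$ argument in Section \ref{section_d1}.

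The inductive step rests on an ``inner analogue'' of Equation \eqref{key_observation}. Comparing the normalised induced weights $V(a, c) = \binom{n}{a, c}$ and $V(a + 1, c - 1) = \binom{n}{a + 1, c - 1}$ and partitioning contributing chains by their starting type, the lower-side difference reproduces the outer derivation exactly, while an extra upper-side term $\Delta_{\mathrm{upper}}(a, c)$ arises from upper starts whose chains pass through $(a, c)$ but not through $(a + 1, c - 1)$. A direct enumeration on the staircase diagram shows that $\Delta_{\mathrm{upper}}(a, c)$ is a short sum of $W$-values at a small set of upper types clustered near $(a - k, c + k)$: the outer upper starts $(a - k + 1, c + k)$ and $(a - k, c + k)$, and (when the corresponding widths $m$ or $m + 1$ lie strictly between $0$ and $k$) the inner upper symmetric chains starting at $(a - m, c + m)$ and $(a - m, c + m + 1)$.

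By the symmetry of $W$, the outer upper terms evaluate to $U$-values via the outer lemma, while the inner upper terms translate into $W$-values at inner lower types with parameter at least $m$, to which the inductive hypothesis applies. Substituting these into the $W$-recursion for $(a, c)$ and matching against the $U$-recursion of Definition \ref{x_definition} evaluated at both $(a, c)$ and at the outer lower type $(c + k, a - k) = (a - k, c + k)$ gives, after algebraic simplification, the identity $W(a, c) = U(a, c) - U(a - k, c + k)$. The main obstacle lies in the base case $m = k - 1$: here the inner upper start $(a - k + 1, c + k - 1)$ reflects under symmetry to $(c + k - 1, a - k + 1) = (a, c)$ itself, producing a self-referential occurrence of $W(a, c)$ on the right-hand side; solving this linear equation and identifying the resulting coefficient with the $U$-recursion at $(c + k, a - k)$ anchors the induction and completes the argument.
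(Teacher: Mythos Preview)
Your overall strategy---induction on $m = a - c$ from $m = k-1$ downward, via an inner analogue of Equation~\eqref{key_observation}---is exactly the paper's. The error is in your enumeration of $\Delta_{\mathrm{upper}}(a,c)$.

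The ``inner upper symmetric chains starting at $(a-m,c+m)$ and $(a-m,c+m+1)$'' that you list are $(c,a)$ and $(c,a+1)$: these are the \emph{same chains} as those starting at the inner lower types $(a,c)$ and $(a+1,c)$, because the short symmetric chains in $\mathcal{C}$ have both endpoints as ``starts''. But the term $W(a,c)$ on the left and the term $-W(a+1,c)$ in the outer-derivation part already account for these chains. Adding them again on the upper side is double-counting, and the ``self-referential occurrence of $W(a,c)$'' you flag as the main obstacle is precisely a symptom of this error, not a genuine feature of the recursion. If you carry the double-counted recursion through, it does not reduce to $U(a,c)-U(a-k,c+k)$.

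The paper's enumeration is much simpler: the \emph{only} upper-side starts whose chains pass through $(a,c)$ but not through $(a+1,c-1)$---and which are not already counted among the lower starts---are the outer upper types $(a-k,c+k)$ and $(a-k+1,c+k)$, the second one appearing only when $a-c<k-1$. (When $a-c=k-1$, the type $(a-k+1,c+k)=(c,a+1)$ is the boundary symmetric width-$k$ chain whose lower end is $(a+1,c)$, already counted.) Since both surviving upper types are outer, one substitutes $W=U$ there directly; the only inductive call is on $W(a+1,c)$ (parameter $m+1$), and the identity $W(a,c)=U(a,c)-U(a-k,c+k)$ drops out with no self-reference and no linear equation to solve.
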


\begin{proof}
    It suffices to show this for lower types.

    We recall that a lower type $(a, c)$ is inner if and only if $a-c<k$, and so the outermost inner types have $a-c=k-1$
    
    For the inner types, besides the chains starting at $(a, c), (a+1+k, c-1-k)$ and $(a+1+k, c-k)$ that go through one of $(a, c), (a+1, c-1)$ but not the other, we also have the chains starting (on the other side of the cube) at $(a-k, c+k)$ and $(a-k+1, c+k)$ (the second one provided that $a-c<k-1$) that pass through $(a, c)$ but not $(a+1, c-1)$. Hence, we have: 
    \begin{multline*}
        W(a, c)=\binom{n}{a, c} - \binom{n}{a+1, c-1} - W(a+1, c)\\ + W(a+1+k, c-1-k) + W(a+1+k, c-k) - W(a-k, c+k) - W(a-k+1, c+k) \cdot 1_{a-c<k-1},
    \end{multline*}
    where we recall that since $(a+1+k, c-1-k), (a+1+k, c-k), (a-k, c+k)$ and $(a-k+1, c+k)$ are outer types, their $W$s and $X$s are equal.
    
    Hence, for the outermost inner types (that is, those where $a-c=k-1$), we indeed have $W(a, c)=U(a, c)-U(a-k, c+k)$.
    For $(a, c)$ an inner type closer to the middle, we have by the inductive hypothesis:
    \begin{multline*}
        W(a, c)
        =\binom{n}{a, c} - \binom{n}{a+1, c-1} - W(a+1, c)\\ + W(a+1+k, c-1-k) + W(a+1+k, c-k) - W(a-k, c+k) - W(a-k+1, c+k)\\
        = \binom{n}{a, c} - \binom{n}{a+1, c-1} - \Big( U(a+1, c) - U(a+1-k, c+k) \Big)\\ + U(a+1+k, c-1-k) + U(a+1+k, c-k) - U(a-k, c+k) - U(a-k+1, c+k)\\
        = U(a, c) + U(a+1-k, c+k) - U(a-k, c+k) - U(a-k+1, c+k)\\ = U(a, c)-U(a-k, c+k),
    \end{multline*}
    which proves the lemma.
\end{proof}

Now we will express $U(a, c)$ (at the moment defined recursively) in a form easier to work with.

\begin{notation}
    Let us denote
    \begin{equation}
        l_C(c) := \frac{C-c}{k+1}.
    \end{equation}
    In the contexts where this notation is used, we always have $c \equiv C \ (\modd k+1)$, so $l_C(c)$ is an integer expressing how many multiples of $(k+1)$ $c$ is away from $C$.
\end{notation}

Using inductive relations developed in the previous section (Equations \ref{eq_step_1} and \ref{eq_step_2}), we can show the following lemma:

\begin{restatable}{lemma}{xminusxweightlemma}
\label{x_minus_x_weight_lemma}
    For $(A, C)$ in one of the inner lower layers, we have
    \begin{multline*}
        U(A, C) - U(A-k, C+k)\\
        =\sum_{\substack{c \equiv C \ (\modd k+1)}} \sum_{j \geqslant 0} \binom{l_C(c)}{j} \binom{n}{c+j} \binom{n-c-j-l_C(c)-1}{B-j}\\ - \sum_{\substack{c \equiv C-1 \ (\modd k+1)}} \sum_{j \geqslant 0} \binom{l_{C-1}(c)}{j} \binom{n}{c+j} \binom{n-c-j-l_{C-1}(c)-1}{B-j},
    \end{multline*}
    where $l_C(c)=\frac{C-c}{k+1}$.
\end{restatable}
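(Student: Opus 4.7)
The plan is to iterate the recurrence defining $U$ in Definition \ref{x_definition} using the same strategy that was developed for $W$ in Section \ref{section_d2_outer}. Since $U$ satisfies the same recursion as $W$ (the recursion of Equation \ref{key_observation}), defining $R_U(d, a, c) := \sum_{i \geq a} P_d(i) U(i, c) (-1)^{i-a}$ yields, by a verbatim repetition of the derivations of Equations \ref{eq_step_1} and \ref{eq_step_2},
\begin{equation*}
    U(a, c) = S(0, a, c) - S(0, a+1, c-1) + R_U(0, a+1+k, c-1-k) + R_U(0, a+1+k, c-k)
\end{equation*}
and
\begin{equation*}
    R_U(d, a, c) = S(d, a, c) - S(d, a+1, c-1) + R_U(d+1, a+1+k, c-1-k) + R_U(d+1, a+1+k, c-k).
\end{equation*}
After $m$ rounds of iteration, each $R_U(m-1, \cdot, \cdot)$-node arising in the expansion has $a$-index $A + m(k+1)$ and $c$-index $C - mk - j$ with multiplicity $\binom{m}{j}$ (where $j$ counts the number of $-(k+1)$-shifts chosen along the $m$ splits); each such node contributes an $S(m-1, \cdot, \cdot)$ leaf pair before branching further. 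The iteration terminates because $U$ vanishes for $c < 0$ or $a > n$, so for $m$ large enough every surviving $R_U$-node vanishes.

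Substituting the closed form of Lemma \ref{s_closed_form} into every resulting $S$-leaf expresses $U(A, C)$ as an explicit sum of products of three binomial coefficients, indexed by $(m, j)$. Applying the same expansion to $U(A - k, C + k) = U(C + k, A - k)$ — which, since $A - C < k$, corresponds to an outer lower type and so equals $W(C+k, A-k)$ from Section \ref{section_d2_outer} — gives a parallel closed-form sum. I then re-index both expansions by setting $c = C - m(k+1)$ and $j_{\text{new}} = m - j$, so that $l_C(c) = m$ and the multiplicity $\binom{m}{j}$ becomes $\binom{l_C(c)}{j_{\text{new}}}$. Under this substitution, the positive $S$-contributions from $U(A, C)$ populate the $c \leq C$ portion of the first sum in the lemma (indices $c \equiv C \pmod{k+1}$), and the negative contributions populate the $c \leq C-1$ portion of the second sum.

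Subtracting $U(C+k, A-k)$ adds the remaining pieces: its $S$-contributions fill in the $c > C$ (respectively $c > C - 1$) tails of the two sums, relying on the generalized binomial convention $\binom{-m}{j} = (-1)^j \binom{m+j-1}{j}$ to make the factor $\binom{l_C(c)}{j}$ (with $l_C(c)$ negative) agree with the alternating signs that arise naturally in the $W(C+k, A-k)$ expansion. The bottom lines of the two resulting sums in the lemma emerge by matching the $B - j_{\text{new}}$ arguments (where $B = n - A - C$) from the re-indexing to the $n - a - c$ arguments of $S$ in Lemma \ref{s_closed_form}.

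The main obstacle will be the index bookkeeping in this re-indexing step — in particular, verifying that the ``reflection'' of $S$-indices across the middle of the cube (when the expansion of $U(C+k, A-k)$ is compared with the $l_C(c) < 0$ terms in the lemma) is captured correctly by the generalized binomial convention, with no over- or under-counting. This reduces to a signed Vandermonde-style identity between binomial coefficients that I would verify separately on the four regimes $c \lessgtr C$ and $c \lessgtr C - 1$.
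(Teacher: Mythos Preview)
Your proposal follows the paper's proof: iterate Equations \ref{eq_step_1}--\ref{eq_step_2} and apply Lemma \ref{s_closed_form} to obtain the $c\le C$ portion of the sums (the paper's Equation \ref{lower_ac_part}), then handle $U(A-k,C+k)$ --- your use of the symmetry $U(a,c)=U(c,a)$ being equivalent to the paper's primed upper-type equations --- to supply the $c>C$ tails. The ``signed Vandermonde-style identity'' you flag as the main obstacle is exactly Equation \ref{what_we_need_lemma} in the paper's proof, and its verification there is indeed the only remaining nontrivial step.
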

The proof of the lemma is mostly technical, hence it is moved to Appendix \ref{appendix_proofs}.

\begin{remark}
    We note that the sum in the lemma above may contain binomial coefficients with a negative upper number, where for $n, k \geqslant 0$ we have:
    \begin{equation}
        \binom{-n}{k} = \frac{-n(-n-1)\cdot \dots \cdot (-n-k+1)}{k!}
    \end{equation}
\end{remark}

Performing some further binomial manipulation, we can show that:

\begin{restatable}{lemma}{xminusxotherformula}
\label{x_minus_x_other_formula}
    We have
    \begin{multline*}
        \sum_{\substack{c \equiv C \ (\modd k+1)}} \sum_{j \geqslant 0} \binom{l_C(c)}{j} \binom{n}{c+j} \binom{n-c-j-l_C(c)-1}{B-j}\\ = \sum_{c \equiv C \ (\modd k+1)} \sum_{0 \leqslant i \leqslant b} \binom{n}{c+i}\binom{l_C(c)+B-i}{l_C(c)}\binom{c+l_C(c)+i}{c+l_C(c)} (-1)^{B-i},
    \end{multline*}
    for $l_C(c)=\frac{C-c}{k+1}$.
\end{restatable}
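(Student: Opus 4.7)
My plan is to reduce the identity to a pure binomial identity for each fixed $c$ (and hence each fixed $L := l_C(c)$) appearing in the outer summation, and then to prove that identity by a chain of classical binomial manipulations. Since both sides are sums over $c \equiv C \ (\modd k+1)$ with identical ranges, it suffices to show that for each such $c$ the inner sums over $j$ and $i$ agree, namely
\begin{equation*}
\sum_j \binom{L}{j}\binom{n}{c+j}\binom{n-c-j-L-1}{B-j} = \sum_i (-1)^{B-i}\binom{n}{c+i}\binom{L+B-i}{L}\binom{c+L+i}{c+L}.
\end{equation*}

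Starting from the left-hand side, I would first expand $\binom{n-c-j-L-1}{B-j}$ by Vandermonde convolution applied to the decomposition $n-c-j-L-1 = (n-c-j) + (-L-1)$, giving
\begin{equation*}
\binom{n-c-j-L-1}{B-j} = \sum_i \binom{n-c-j}{i}\binom{-L-1}{B-j-i},
\end{equation*}
and then apply upper negation $\binom{-L-1}{B-j-i} = (-1)^{B-j-i}\binom{L+B-j-i}{B-j-i}$. The next step is to use the subset-of-subset identity $\binom{n}{c+j}\binom{n-c-j}{i} = \binom{n}{c+j+i}\binom{c+j+i}{c+j}$, which collects all of the $n$-dependence into the single binomial $\binom{n}{c+j+i}$. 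Substituting $s := j+i$ and swapping the order of summation reduces the inner $j$-sum to $\sum_j \binom{L}{j}\binom{c+s}{c+j} = \sum_j \binom{L}{j}\binom{c+s}{s-j} = \binom{L+c+s}{s}$ by one more application of Vandermonde. After rewriting $\binom{L+B-s}{B-s} = \binom{L+B-s}{L}$ and $\binom{L+c+s}{s} = \binom{L+c+s}{L+c}$ and renaming $s$ back to $i$, the expression becomes exactly the right-hand side.

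The whole argument is therefore a short chain of classical identities: Vandermonde convolution (twice), upper negation, and the subset-of-subset identity $\binom{n}{a}\binom{n-a}{b} = \binom{n}{a+b}\binom{a+b}{a}$. I expect the main obstacle to be purely bookkeeping: tracking the signs introduced by the upper negation step, and verifying that the swap of summations is legitimate. Both are routine, since the double sum has finite support — $\binom{n-c-j}{i}$ vanishes for $i<0$ and $\binom{L+B-j-i}{B-j-i}$ vanishes for $B-j-i<0$ — so the interchange requires no analytic justification.
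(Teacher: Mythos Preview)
Your proposal is correct and follows essentially the same route as the paper's proof: both fix $c$, expand $\binom{n-c-j-L-1}{B-j}$ by Vandermonde applied to $(n-c-j)+(-L-1)$, apply upper negation to $\binom{-L-1}{B-j-i}$, use the subset-of-subset identity to collect the $n$-dependence into $\binom{n}{c+j+i}$, substitute $s=j+i$ (the paper writes $h$), and finish with a second Vandermonde on the inner $j$-sum. The only differences are notational.
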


The proof of this lemma is also mostly technical, and as such has been moved to Appendix \ref{appendix_proofs}.

\begin{notation}
    Let us denote:
    \begin{equation}
    \label{fknbc_def}
        F_k(n, B, C) := \sum_{c \equiv C \ (\modd k+1)} \sum_{0 \leqslant i \leqslant b} \binom{n}{c+i}\binom{l_C(c)+B-i}{l_C(c)}\binom{c+l_C(c)+i}{c+l_C(c)} (-1)^{B-i}
    \end{equation}
    (where, naturally, $F_k(n, B, C)=0$ if $B < 0$ or $C < 0$).
\end{notation}

Hence, from Lemmas \ref{x_minus_x_weight_lemma} and \ref{x_minus_x_other_formula} showing that $U(A, C)-U(A-k, C+k)>0$ is equivalent to showing that
\begin{equation*}
    F_k(n, B, C) - F_k(n, B, C-1)>0 \ \ \text{for} \ \ n-k < B+2C \leqslant n.
\end{equation*}

First, we will prove this in the special cases when either $B=0$ or $C=0$, and then prove it in the general case. These three parts of the proof are presented as three lemmas.

\begin{lemma}
\label{b_is_0}
    For $F_k(n, B, C)$ defined as in Equation \ref{fknbc_def}, we have $F_k(n, 0, C)>0$ for any $n, C$ such that $n-k < 2C \leqslant n$.
\end{lemma}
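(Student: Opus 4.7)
The plan is to observe that setting $B = 0$ collapses the inner sum in Equation \ref{fknbc_def}, reducing $F_k(n, 0, C)$ to a manifestly positive expression with no cancellation. First, substituting $B = 0$ leaves only the $i = 0$ term in the inner sum, so
\begin{equation*}
    F_k(n, 0, C) = \sum_{c \equiv C \ (\modd k+1)} \binom{n}{c} \binom{l_C(c)}{l_C(c)} \binom{c + l_C(c)}{c + l_C(c)}.
\end{equation*}

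Next I would argue that the two rightmost factors behave as indicator functions cutting the range of $c$ down to $0 \leqslant c \leqslant C$. Under the standard convention that $\binom{a}{b} = 0$ whenever the lower index $b$ is a negative integer, the factor $\binom{l_C(c)}{l_C(c)}$ equals $1$ exactly when $l_C(c) = (C - c)/(k+1) \geqslant 0$, i.e.\ when $c \leqslant C$, and it vanishes otherwise. Meanwhile $\binom{c + l_C(c)}{c + l_C(c)} = 1$ for all $c \geqslant 0$, since a direct calculation gives $c + l_C(c) = (kc + C)/(k+1) \geqslant 0$. Therefore
\begin{equation*}
    F_k(n, 0, C) = \sum_{\substack{0 \leqslant c \leqslant C \\ c \equiv C \ (\modd k+1)}} \binom{n}{c}.
\end{equation*}

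Finally, I would conclude by positivity of each summand. The hypothesis $2C \leqslant n$ forces $C \leqslant n/2 \leqslant n$, so $\binom{n}{c} > 0$ for every $c$ in the range $0 \leqslant c \leqslant C$. The index set is non-empty as $c = C$ itself belongs to it, and so in fact $F_k(n, 0, C) \geqslant \binom{n}{C} > 0$.

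There is no substantive obstacle here. Once the $B = 0$ specialisation is unpacked, $F_k$ degenerates into a sum of strictly positive binomial coefficients, so the result is essentially a book-keeping exercise. In particular, the stronger inner-layer hypothesis $n - k < 2C$ plays no role in this lemma; it is presumably needed only in the forthcoming two lemmas, where genuine cancellation between the positive and negative terms of the difference $F_k(n, B, C) - F_k(n, B, C-1)$ must be controlled.
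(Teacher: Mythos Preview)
Your argument is correct for the literal statement $F_k(n, 0, C) > 0$: once $B = 0$ collapses the inner sum to the single $i = 0$ term, what remains is a sum of non-negative binomial coefficients containing the term $\binom{n}{C} > 0$. However, the paper's proof actually establishes the stronger comparison $F_k(n, 0, C) > F_k(n, 0, C-1)$, by simplifying $F_k(n, 0, C)$ to $\sum_{c \equiv C \ (\modd k+1)} \binom{n}{c}$ and then invoking Lemma~\ref{layers_modulo_largest}. That comparison, not mere positivity, is what is used downstream: Lemma~\ref{b_is_0} serves as the $B = 0$ base case of the induction proving $F_k(n, B, C) \geqslant F_k(n, B, C-1)$ in the final lemma of Section~\ref{section_d2_inner}. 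Your remark that the hypothesis $n - k < 2C$ ``plays no role'' is therefore true for the lemma as literally stated but misleading for its purpose --- that hypothesis is exactly what places $C$ as the residue closest to $n/2$ modulo $k+1$, which Lemma~\ref{layers_modulo_largest} needs. The lemma's phrasing appears to be a slight misstatement; the paper's proof and its subsequent use make clear the intended claim is the comparison with $F_k(n, 0, C-1)$, and that is what you would need to supply here.
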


\begin{proof}
    From the definition of $F_k(n, B, C)$, in particular $F_k(n, 0, C)$ equals
    \begin{equation*}
        \sum_{c \equiv C \ (\modd k+1)} \sum_{0 \leqslant i \leqslant 0} \binom{n}{c+i}\binom{l_C(c)+0-i}{l_C(c)}\binom{c+l_C(c)+i}{c+l_C(c)} (-1)^{0-i}
        = \sum_{c \equiv C \ (\modd k+1)} \binom{n}{c}.
    \end{equation*}
    It is a well-known fact (written with a proof in Lemma \ref{layers_modulo_largest}) that for $0 \leqslant C \leqslant \frac{n}{2}$ this is strictly greater than
    \begin{equation*}
        F_k(n, 0, C-1) = \sum_{c \equiv C-1 \ (\modd k+1)} \binom{n}{c},
    \end{equation*}
    which finishes the proof.
\end{proof}

\begin{lemma}
\label{c_is_0}
    For $F_k(n, B, C)$ defined as in Equation \ref{fknbc_def}, we have $F_k(n, B, 0) \geqslant 0$ for any $n, B$ such that $n-k < B \leqslant n$, with equality if and only if $B=n$.
\end{lemma}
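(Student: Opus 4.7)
The plan is to sidestep a direct attack on the double sum defining $F_k(n,B,0)$ and instead translate everything back into the recursion for $U$. Concatenating Lemmas \ref{x_minus_x_weight_lemma} and \ref{x_minus_x_other_formula} precisely amounts to the identity
\[
F_k(n,B,C) - F_k(n,B,C-1) \;=\; U(A,C) - U(A-k,\,C+k),
\]
where $A = n - B - C$. Specialising to $C = 0$ and using the boundary conventions $F_k(n,B,-1) = 0$ and $U(a,c) = 0$ whenever $a < 0$, together with the hypothesis $n - k < B$ (which forces $A = n - B < k$ and hence $A - k < 0$), this collapses to
\[
F_k(n,B,0) \;=\; U(n-B,\,0).
\]

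It therefore suffices to show $U(A,0) \geq 0$ for $0 \leq A < k$, with equality only when $A = 0$. For this I would return to the recursive Definition \ref{x_definition} and specialise to $c = 0$. The key observation is that the three terms $\binom{n}{a+1,-1}$, $U(a+1+k,-1-k)$ and $U(a+1+k,-k)$ all vanish by the stated conventions, so the recursion collapses dramatically to
\[
U(a,0) \;=\; \binom{n}{a} - U(a+1,0),
\]
and, with the base case $U(n+1,0) = 0$, this unfolds to the telescoping alternating sum
\[
U(a,0) \;=\; \sum_{j=a}^{n} (-1)^{j-a}\binom{n}{j}.
\]
A standard one-line computation (for instance by induction on $n-a$ via Pascal's rule) evaluates this to $\binom{n-1}{a-1}$, with the convention $\binom{n-1}{-1} = 0$. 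Combining the two steps then gives
\[
F_k(n,B,0) \;=\; \binom{n-1}{n-B-1} \;=\; \binom{n-1}{B},
\]
which is non-negative for $0 \leq B \leq n$ and vanishes precisely when $B = n$.

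There is essentially no serious obstacle in this plan: the only real observation is that on the axis $c = 0$ the recursion for $U$ truncates completely, because every contribution that would otherwise pull in terms from the $C > 0$ half of the cube is killed either by a negative second argument in $U$ or by a multinomial coefficient with a negative entry. Once this is spotted, the remainder is a single instance of the classical alternating partial-sum identity for binomial coefficients, and both the non-negativity and the characterisation of equality fall out immediately.
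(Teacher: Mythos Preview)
Your proof is correct and follows essentially the same route as the paper: reduce $F_k(n,B,0)$ to $U(n-B,0)$ via the vanishing of $U(n-B-k,k)$, then observe that the recursion for $U(\cdot,0)$ collapses to $U(a,0)=\binom{n}{a}-U(a+1,0)$ and hence $U(A,0)=\binom{n-1}{A-1}$. The only cosmetic difference is that you write the identity as $F_k(n,B,C)-F_k(n,B,C-1)=U(A,C)-U(A-k,C+k)$ and invoke $F_k(n,B,-1)=0$, whereas the paper phrases the same reduction slightly differently.
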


We note that the type with $B=n$ is contains only one point, $(1, \dots, 1)$. We do not mind its weight to be $0$, as explained in the proof of Lemma \ref{b11111_can_be_0}.

\begin{proof}
    To prove this, we go back a couple of steps and recall that $F_k(n, B, C)=U(n-B-C, C)-U(n-B-C-k, C+k)$, so
    \begin{equation}
        F_k(n, B, 0)=U(n-B, 0) - U(n-B-k, k)=U(n-B, 0),
    \end{equation}
    where the second equality holds as for $n-k<B$ we have $U(n-B-k, k)=0$. Hence, it suffices to show that for any $0 \leqslant A \leqslant n$, $U(A, 0) \geqslant 0$, with equality if and only if $A=0$.

    From Definition \ref{x_definition} of $U$ we have $U(n, 0)=1$ and for $0 \leqslant A<n$:
    \begin{equation}
        U(A, 0) = \binom{n}{A} - U(A+1, 0).
    \end{equation}
    From this, it can be easily shown by induction that $U(A, 0)=\binom{n-1}{A-1}$, from which the lemma follows.
\end{proof}

We note that since $F_k(n, B, C)=F_k(n, B, n-B-C)$, the above lemma also shows that $F_k(n, B, C) \geqslant F_k(n, B, C)$ for types $(A, B, C)$ with $A=0$ (with equality if and only if $B=n$).

Before we prove the general case, let us introduce a helpful result on symmetry of $F_k(n, B, C)$.

\begin{restatable}{lemma}{symmetrylemma}
\label{symmetry_lemma}
    We have $F_k(n, B, C) = F_k(n, B, A)$ (where, of course, $A+B+C=n$).
\end{restatable}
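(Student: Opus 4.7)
The plan is to reduce the claimed symmetry of $F_k$ to the built-in symmetry $U(a, c) = U(c, a)$ of Definition~\ref{x_definition}. For $(A, B, C)$ an inner lower type, Lemmas~\ref{x_minus_x_weight_lemma} and~\ref{x_minus_x_other_formula} together show $F_k(n, B, C) = U(A, C) - U(A - k, C + k)$. I would carry out the analogous chain of manipulations for the mirror type $(C, B, A)$, which is inner upper, using the primed framework of Subsection~4.5 (Equations~\ref{key_observation_prime}, \ref{eq_step_1_prime} and \ref{eq_step_2_prime}, together with the closed form in Lemma~\ref{s_closed_form_prime}) in place of the unprimed framework of Subsection~4.4. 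Since $S'(d, a, c)$ is obtained from $S(d, a, c)$ just by swapping $a$ and $c$, the upper derivation produces exactly the sum in Equation~\ref{fknbc_def} but with $A$ replacing $C$ throughout; that is, $F_k(n, B, A) = U(C, A) - U(C + k, A - k)$.

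The lemma then follows in two lines: by the $U$-symmetry $U(A, C) = U(C, A)$ and $U(A - k, C + k) = U(C + k, A - k)$, we have
\begin{equation*}
    F_k(n, B, C) = U(A, C) - U(A - k, C + k) = U(C, A) - U(C + k, A - k) = F_k(n, B, A).
\end{equation*}

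The main obstacle I anticipate is the routine but fiddly verification that the upper-side derivation really yields the sum formula in Equation~\ref{fknbc_def} (with $A$ in place of $C$), rather than a superficially different expression. This reduces to re-running the binomial manipulations of the proof of Lemma~\ref{x_minus_x_other_formula} in the primed setting, each step of which has a mirror version obtained by swapping the roles of the two ``staircase'' coordinates. A safer but longer alternative would be to prove the algebraic identity directly from Equation~\ref{fknbc_def} via the involutive substitution $c \mapsto n - c - B$, $i \mapsto B - i$, which preserves the congruence $c \equiv C \ (\modd k+1)$ and sends $l_C(c)$ to $-l_A(c')$; the sign accumulations from the negative-upper-index identity $\binom{-m-1}{j} = (-1)^j \binom{m+j}{j}$ then regroup into the required $(-1)^{B-i}$ factor. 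This route would handle all triples $(n, B, C)$ uniformly, but requires careful bookkeeping of the summation range and of terms with $l_C(c) < 0$, where the summand is interpreted under the paper's negative-upper-index convention.
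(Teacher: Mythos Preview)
Your main route rests on a misreading of what Lemmas~\ref{x_minus_x_weight_lemma} and~\ref{x_minus_x_other_formula} actually establish. They do \emph{not} give $F_k(n, B, C) = U(A, C) - U(A-k, C+k)$; rather, the right-hand side of Lemma~\ref{x_minus_x_weight_lemma} is a difference of two sums, and Lemma~\ref{x_minus_x_other_formula} identifies the first with $F_k(n, B, C)$ and (by the same manipulation with $C-1$ in place of $C$) the second with $F_k(n, B, C-1)$. Thus what the paper records just below the definition of $F_k$ is
\[
    U(A, C) - U(A-k, C+k) = F_k(n, B, C) - F_k(n, B, C-1).
\]
With this correction your appeal to the built-in symmetry $U(a, c) = U(c, a)$ only yields equality of these \emph{differences} on the two sides, not of the values $F_k(n, B, C)$ and $F_k(n, B, A)$ themselves. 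Upgrading from the difference identity to the pointwise identity would require an independent boundary match (say at $C = 0$ versus $A = n-B$) together with a telescoping argument across all inner $C$, none of which your proposal supplies; and the ``mirror'' upper-side computation naturally produces a sum indexed by $a \equiv A$ rather than by $c \equiv A$, so it is not automatic that it coincides with $F_k(n, B, A)$ as defined.

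Your alternative route---rewriting $F_k(n, B, C)$ directly from Equation~\eqref{fknbc_def} via the involution $c \mapsto n - B - c$, $i \mapsto B - i$ and the negative-upper-index identity---is essentially the paper's own proof. There the authors first recast $F_k(n, B, C)$ (using the same manipulations as in the proof of Lemma~\ref{x_minus_x_weight_lemma}) as a triple sum $\sum_{c \equiv C}\sum_{i,j}(-1)^j\binom{l_C(c)+j}{i+j}\binom{i+j}{j}\binom{n}{a+i,\,c+j}$, write the analogous expression for $F_k(n, B, A)$ summed over $a \equiv A$, and then match the two via the change of index $a + c = A + C$ (so $l_C(c) = -l_A(a)$) together with $(-1)^p\binom{p-q}{p} = \binom{q-1}{p}$. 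If you pursue this route you will arrive at exactly that computation; the main approach as written, however, does not go through.
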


The proof of the lemma can be found in Appendix \ref{appendix_proofs}. We are now ready to prove the general case.

\begin{lemma}
    For $B, C \geqslant 0$, $n-k < B+2C \leqslant n$ (that is, type $(n-B-C, B, C)$ is in an inner lower layer), we have $F_k(n, B, C) \geqslant F_k(n, B, C-1)$, with equality if and only if $B=n, C=0$.
\end{lemma}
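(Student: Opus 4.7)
The two boundary cases are already dispatched: Lemma \ref{b_is_0} gives strict inequality when $B = 0$, and Lemma \ref{c_is_0}, combined with the convention $F_k(n, B, -1) = 0$, handles $C = 0$ (yielding the exceptional equality only when $B = n$). It remains to establish strict inequality in the genuinely interior case $B \geqslant 1$ and $C \geqslant 1$.

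My plan is to return to the identity
\begin{equation*}
    F_k(n, B, C) - F_k(n, B, C-1) \;=\; U(A, C) - U(A - k, C + k)
\end{equation*}
underlying Lemmas \ref{x_minus_x_weight_lemma} and \ref{x_minus_x_other_formula}, and to expand both $U$-values via the recursion of Definition \ref{x_definition}. The target is to mimic the telescoping unfolding from Subsection \ref{section_d2_outer}, where strict positivity of $W$ for outer types was obtained by peeling off contributions of the form $\binom{n}{a, c} - \binom{n}{a+1, c-1}$ and reducing to $S$-differences. Here $(A-k, C+k)$ is an outer upper type (or $U(A-k, C+k) = 0$ if $A < k$), so $U(A-k, C+k)$ already equals its corresponding $W$-value and unfolds completely into $S'$-quantities. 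After cancelling the two $U$-expansions, one hopes to read off $U(A, C) - U(A-k, C+k)$ as a sum of $S$-differences $S(d, a, c) - S(d, a+1, c-1)$ (positive by Lemma \ref{claim_s_difference_positive}) together with a residue-class difference $\sum_{c \equiv C \ (\modd k+1)} \binom{n}{c} - \sum_{c \equiv C-1 \ (\modd k+1)} \binom{n}{c}$ (positive by Lemma \ref{layers_modulo_largest}, which also powered Lemma \ref{b_is_0}).

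The main obstacle is organising the two expansions so that all surviving coefficients are nonnegative. Because $(A-k, C+k)$ sits on the opposite side of the cube from $(A, C)$, its unfolding travels outward in the opposite direction, and one must line up the layer-by-layer contributions correctly — the staircase-diagram picture should be indispensable here. A secondary subtlety is the outermost inner boundary $B + 2C = n - k + 1$, where the recursion for $W$ has one fewer upper-side term (precisely the case split in the proof of Lemma \ref{x_minus_x_weight_lemma}); this boundary likely needs separate attention as a sub-base. A cleaner-looking alternative is to apply Pascal's identity to the middle binomial $\binom{l_C(c) + B - i}{l_C(c)}$ in Equation \ref{fknbc_def} and induct on $B$ using Lemma \ref{b_is_0} as the base; the difficulty there is to reconcile the two residue classes $c \equiv C$ and $c \equiv C-1 \pmod{k+1}$ after the split, and to carry through the equality case $B = n, C = 0$ cleanly through the induction.
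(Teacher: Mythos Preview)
Your proposal is a sketch of two possible approaches, neither of which is carried out, and neither of which is the paper's route. The paper does not go back to $U$ and re-unfold, nor does it induct on $B$. Instead it inducts on $n$, via a clean three-term Pascal-type recurrence for $F_k$ itself:
\[
F_k(n, B, C) \;=\; F_k(n-1, B, C) \;+\; F_k(n-1, B-1, C) \;+\; F_k(n-1, B, C-1).
\]
This comes from applying Pascal's rule to $\binom{n}{c+i}$ in Equation \ref{fknbc_def}, and then splitting the resulting $\binom{n-1}{c+i-1}\binom{c+l_C(c)+i}{c+l_C(c)}$ term once more by Pascal on the second factor. The three summands correspond exactly to $(n-1,B,C)$, $(n-1,B-1,C)$ (via $i\mapsto i-1$), and $(n-1,B,C-1)$ (via $c\mapsto c-1$). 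With this recurrence in hand, the inductive step is immediate in the generic range, and the two boundary layers $B+2C=n$ and $B+2C=n-k+1$ are handled by the symmetry $F_k(n,B,C)=F_k(n,B,n-B-C)$ of Lemma \ref{symmetry_lemma}.

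Your first plan --- unfolding $U(A,C)$ and $U(A-k,C+k)$ separately and hoping the surviving coefficients are nonnegative --- is essentially redoing the derivation of Lemma \ref{x_minus_x_weight_lemma} without the subsequent repackaging, and you yourself flag the organisational obstacle; there is no indication it closes. Your second plan --- Pascal on the \emph{middle} binomial $\binom{l_C(c)+B-i}{l_C(c)}$ and induction on $B$ --- is closer in spirit but targets the wrong factor: splitting that binomial shifts $l_C(c)$ or $B$, which does not align the residue classes $c\equiv C$ and $c\equiv C-1 \pmod{k+1}$ you need to compare. The paper's choice (Pascal on $\binom{n}{c+i}$, then on the last binomial) is what makes the recurrence close up term-by-term without any residue-class mismatch, and the induction variable $n$ is what lets the symmetry lemma absorb the boundary cases.
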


\begin{proof}
    We will prove this by induction on $n=A+B+C$, with base cases $A=0$, $B=0$ or $C=0$ (proved in Lemmas \ref{b_is_0} and \ref{c_is_0}).
    
    First we note that from the binomials coefficient identity $\binom{n}{k}=\binom{n-1}{k}+\binom{n-1}{k-1}$ we have
    \begin{multline*}
        \binom{n}{c+i}\binom{c+l_C(c)+i}{c+l_C(c)}\\
        =\binom{n-1}{c+i}\binom{c+l_C(c)+i}{c+l_C(c)}+
        \binom{n-1}{c+(i-1)}\binom{c+l_C(c)+(i-1)}{c+l_C(c)}\\+
        \binom{n-1}{i+(c-1)}\binom{l_C(c)+i+(c-1)}{l_C(c)+(c-1)}.
    \end{multline*}
    Since if needed, we can also write $B-i$ as $(B-1)-(i-1)$, we note that we have:
    \begin{equation}
        F_k(n, B, C)=F_k(n-1, B, C)+F_k(n-1, B-1, C)+F_k(n-1, B, C-1).
    \end{equation}

    For $B, C>0$ and $n-k+1 < B+2C < n$, applying the inductive hypothesis is relatively straightforward:
    \begin{itemize}
        \item Since $B, C > 0$ and $(n-1) -k < B+2C \leqslant (n-1)$, by the inductive hypothesis we have $F_k(n-1, B, C)>F_k(n-1, B, C-1)$ (as $C>0$, the inequality is sharp).
        \item Since $B-1 \geqslant 0$, $C>0$ and $(n-1) - k < (B-1)+2C \leqslant (n-1)$, by the inductive hypothesis we have $F_k(n-1, B-1, C)>F_k(n-1, B-1, C-1)$.
        \item Since $B, C-1 \geqslant 0$ and $(n-1) -k < B+2(C-1) \leqslant (n-1)$, by the inductive hypothesis we have $F_k(n-1, B, C-1) \geqslant F_k(n-1, B, C-2)$.
    \end{itemize}
    Adding these three inductive inequalities, we get that:
    \begin{multline*}
        F_k(n, B, C)=F_k(n-1, B, C)+F_k(n-1, B-1, C)+F_k(n-1, B, C-1)\\
        > F_k(n-1, B, C-1)+F_k(n-1, B-1, C-1)+F_k(n-1, B, C-2) = F_k(n, B, C-1),
    \end{multline*}
    which is what we need.

    Now we need to separately consider the special cases when $n, B, C$ do not satisfy the extra requirements that make the induction straightforward. The cases when $B=0$ or $C=0$ are dealt with, respectively, in Lemmas \ref{b_is_0} and \ref{c_is_0}.

    In the case when $B, C > 0$ but $B+2C=n$, by Lemma \ref{symmetry_lemma} we have:
    \begin{equation}
        F_k(n-1, B, C)=F_k(n-1, B, (n-1)-B-C)=F_k(n-1, B, C-1).
    \end{equation}
    Applying the inductive hypothesis as before to get that $F_k(n-1, B-1, C) > F_k(n-1, B-1, C-1)$ and that $F_k(n-1, B, C-1) \geqslant F_k(n-1, B, C-2)$ and adding these three inequalities finishes the inductive proof in this case.

    In the case when $B, C > 0$ but $B+2C=n-k+1$, again by Lemma \ref{symmetry_lemma} we have:
    \begin{equation}
        F_k(n-1, B, C-1) = F_k(n-1, B, (n-1)-B-(C-1)) = F_k(n-1, B, C+k-1)=F_k(n-1, B, C-2).
    \end{equation}
    Using the inductive hypothesis to get that $F_k(n-1, B, C) > F_k(n-1, B, C-1)$ and that $F_k(n-1, B-1, C) > F_k(n-1, B-1, C-1)$ finishes the proof in this case.

    This finishes the proof of the lemma in all cases, and so also the proof of Theorem \ref{the_main_theorem}.
\end{proof}

\section{Conclusions and further research}

\subsection{Conclusions}

In this paper, we have solved Problem \ref{the_main_problem}, i.e. shown that for $d=2$, the unique largest subset $A \subseteq \{0, 1, \dots, d\}^n$ such that there are no distinct elements $x, y \in A$ with $x_i \leqslant y_i$ for all $1 \leqslant i \leqslant n$ and $x_i < y_i$ for at most $k$ coordinates $i$ is
\begin{equation}
    A = \Big\{ x \in \{0, 1, \dots, d\}^n : |x| \equiv n \ (\modd 2k+1) \Big\}.
\end{equation}
The main contribution of this paper is the decomposition of the cube $\cube$ into weighted chains of width at most $k$ that are either of maximal length $dk+1$ or symmetric. A large portion of it is showing that the weights of the chains, when assigned recursively to basic chains (defined in Definition \ref{basic_definition}), are indeed non-negative (and, in fact, positive, except for one singleton chain).

Weighted chain decomposition is preserved under a permutation of coordinates, allowing us to consider only types of points, instead of points themselves.

Moreover, the decomposition is explicit and allows us to shift focus from the construction of such a decomposition to proving its properties (i.e., positive weights) with binomial coefficients.

Solving the case $d=1$ with weighted chains, we discovered a new proof of Sperner's theorem.

\subsection{Methods}

Even in the parts of the proof which are more algebra-heavy, keeping a larger picture in mind and the meaning of the manipulated symbols (helped by, for example, staircase diagrams) was very helpful, if not necessary.

While finding a proof, we also made use of simulations in Python, for example to check that assigning non-negative weights to basic chains (as defined in Definition \ref{basic_definition}) is possible (which also verified our hypothesis for small cases) or that our method of proving that the weights of chains starting in outer layers (Subsection \ref{section_d2_outer}) has a chance of working. It also saved us the time of pursuing some proof ideas that could not work (due to being based on false presumptions).

We note that running fast simulations was possible due to the assignment of weights to chains being a straightforward algorithm.

\subsection{Further research}

An obvious next step after proving this result (which we are currently exploring) is showing that our thesis holds for general $d$. Our conjecture is that for $d>2$, the largest subset $A \subseteq \{0, 1, \dots, d\}^n$ with the required properties is
\begin{equation}
    A = \Big\{ x \in \{0, 1, \dots, d\}^n : |x| \equiv \Big\lfloor \frac{nd}{2} \Big\rfloor \ (\modd dk+1) \Big\}.
\end{equation}
We also conjecture that this set is unique for $2 \mid nd$ and that it is one of two unique sets for $2 \nmid nd$ (where the other one consists of elements $x$ with $x \equiv \Big\lceil \frac{nd}{2} \Big\rceil \ (\modd dk+1)$).

We have proved this conjecture asymptotically, i.e. shown that for any $d, k$ constant and $n \rightarrow \infty$, we have
\begin{equation}
    \frac{|A_{k,d}(n)|}{(d+1)^n} = \frac{1}{dk+1} + o(1).
\end{equation}
The main difficulty in generalising the proof for $d=2$ to the general $d$ is that for larger $d$s, there exist types with no chain of width at most $k$ starting at them and passing through a type from the conjectured optimal set.

A potential way to solve this problem might be to consider a broader family of chains (than just the basic ones), with more choice of weights allowed at each step, so that the types with no chain starting there might get the required weight $1$ from other chains starting earlier.

An interesting additional question to ask would be what makes the family of basic chains sufficient for an assignment of non-negative weights to them being possible? For example, a similar family of `anti-basic chains', i.e. chains that, if starting at type $(A, B, C)$, go through types $(A-1, B+1, C)$, $(A-2, B+2, C)$, $\dots,$ $(A-k, B+k, C)$, $(A-k, B+k-1, C+1)$, $(A-k, B+k-2, C+2)$, $\dots,$ $(A-k, B, C+k)$ is not sufficient, and assigning weight to these chains recursively as in Subsection \ref{assigning_weights} would lead to some negative weights being assigned (which we checked in a simulation).

Another interesting line of investigation would be trying different methods for constructing bounded-width (unweighted) chain decomposition of a cube, or showing that it does not exist.

\appendix

\section{More technical lemmas involving binomial coefficients}
\label{appendix_proofs}

\begin{lemma}
\label{layers_modulo_largest}
    For a given $k \geqslant 2$, let us denote
    \begin{equation*}
        B(n, m) := \sum_{\substack{0 \leqslant i \leqslant n, \\ i \equiv m \ (\modd k+1)}} \binom{n}{i}.
    \end{equation*}
    Then for any $n$, we have $B(n, m)>B(n, m')$ if and only if the closest element of
    \begin{equation*}
        L(n, m) := \{0 \leqslant i \leqslant n: i \equiv m \ (\modd k)\}
    \end{equation*}
    to $\frac{n}{2}$ is closer than the closest element of $L(n, m')$.
\end{lemma}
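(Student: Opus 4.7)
The plan is to prove the lemma by induction on $n$, using Pascal's identity to obtain the recursion
\begin{equation*}
    B(n, m) = B(n-1, m) + B(n-1, m-1)
\end{equation*}
(where the residues $m, m-1$ are taken modulo $k+1$), together with the reflection symmetry $B(n, m) = B(n, n-m)$ inherited from $\binom{n}{i} = \binom{n}{n-i}$. The base cases $n \leqslant k$ are immediate: each $L(n, m)$ contains at most one element, so $B(n, m) = \binom{n}{m}$ (or $0$) and the claim reduces to the standard unimodality of $\binom{n}{\cdot}$ around $n/2$.

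For the inductive step I would first apply the reflection symmetry to reduce to the case in which the closest elements of $L(n, m)$ and $L(n, m')$ to $n/2$ both lie in $[0, n/2]$; write $d_n(m)$ for this minimal distance. The geometric content of the step is that if $i < n$ is the closest element of $L(n, m)$ to $n/2$, then $i$ is also the closest element of $L(n-1, m)$ to $(n-1)/2$, and $i - 1$ plays the analogous role for $L(n-1, m-1)$, with distances shifted by $\pm 1/2$ in predictable directions. Consequently, the hypothesis $d_n(m) < d_n(m')$ induces corresponding inequalities on the distances at level $n-1$ for each of the two recursive sub-pairs, and the inductive hypothesis can be applied to each.

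The main obstacle I anticipate is that the two sub-comparisons at level $n-1$ can produce inequalities pointing in opposite directions when $m$ and $m'$ sit on opposite sides of the "central" residue $n \bmod (k+1)$, so that naively summing the two inductive inequalities does not give the desired conclusion. To resolve this I would use the reflection $B(n-1, r) = B(n-1, n-1-r)$ to rewrite one of the sub-terms so that both inequalities point the same way before adding them; the degenerate case $n \equiv m \pmod{k+1}$, in which $L(n-1, m) = L(n, m) \setminus \{n\}$ loses a point, requires a separate short check that reduces to the previous case by a single application of unimodality at the boundary.

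For the reverse direction, I would observe that $d_n(m) = d_n(m')$ forces either $m \equiv m'$ or $m + m' \equiv n \pmod{k+1}$: the closest elements $i \in L(n, m)$ and $i' \in L(n, m')$ to $n/2$ satisfy $|i - n/2| = |i' - n/2|$, so either $i = i'$ or $i + i' = n$. In the nontrivial case $m + m' \equiv n$, the reflection symmetry gives $B(n, m) = B(n, n - m) = B(n, m')$, so equal distances yield equal values of $B$; the "only if" then follows by trichotomy from the forward direction.
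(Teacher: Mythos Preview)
Your approach is essentially the paper's: induction on $n$ via Pascal's identity $B(n,m) = B(n-1,m) + B(n-1,m-1)$, with base case $n \leqslant k$ and the reflection $B(n,m) = B(n,n-m)$. The one simplification you are missing is that the paper first reduces the general comparison to the case of \emph{adjacent} residues: it suffices to prove $B(n,m) > B(n,m-1)$ for $m$ in the range $\frac{n-k+1}{2} < m \leqslant \frac{n}{2}$ (so that $m$ itself is the element of $L(n,m)$ nearest $n/2$), and the full ordering then follows by chaining these together with reflection. With adjacent residues, Pascal produces an exact cancellation of the shared term $B(n-1,m-1)$, so the inductive step collapses to the \emph{single} comparison $B(n-1,m) > B(n-1,m-2)$; the only remaining case split is whether the element of $L(n-1,m-2)$ nearest $(n-1)/2$ is $m-2$ or $m-2+(k+1)$, and in both cases the inductive hypothesis applies directly. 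This sidesteps precisely the ``obstacle'' you anticipate, where the two sub-comparisons at level $n-1$ can point in opposite directions. Your proposed fix via reflection is in the right spirit, but carrying it out for arbitrary $m,m'$ would force you into more case analysis than you indicate (for instance, your claim that $i-1$ is automatically the nearest element of $L(n-1,m-1)$ to $(n-1)/2$ fails exactly at the boundary $i = \lceil (n-k)/2 \rceil$), and untangling that essentially rediscovers the adjacent-residue reduction.
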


\begin{proof}
    We will prove the lemma by induction on $n$.
    The thesis is clear for $n \leqslant k$, which is the base case. Now, let us consider larger $n$.

    It suffices to prove that for $m \leqslant \frac{n}{2}$ such that $|\frac{n}{2}-m|\leqslant |m+k-\frac{n}{2}|$ (i.e., $m$ is the element of $L(n, m)$ closest to the middle) and $|\frac{n}{2}-m|<|(m-1+k)-\frac{n}{2}|$ (i.e., the closest element of $L(n, m)$ to the middle is closer than such an element of $L(n, m-1)$), we have $B(n, m)>B(n, m-1)$. We note that these conditions put $m$ in the range $\frac{n-k+1}{2} < m \leqslant \frac{n}{2}$.
    
    Since $\binom{n}{i}=\binom{n-1}{i} + \binom{n-1}{i-1}$, the inequality we wish to prove is equivalent to $B(n-1, m) > B(n-1, m-2)$.
    
    For $m$ in the specified range, the element of $L(n-1, m)$ closest to $\frac{n-1}{2}$ is $m$, and the element of $L(n-1, m-2)$ closest to the middle is either $m-2$ (the first case) or $m-2+k$ (the second case).
    
    In the first case, we have $B(n-1, m) > B(n-1, m-2)$ by the inductive hypothesis, and in the second we need to have $\frac{n-1}{2}-m<m-2+k-\frac{n-1}{2}$, which is equivalent to $n-1-k+2<2m$, which is equivalent to $\frac{n-k+1}{2}<m$, which holds for $m$ from the assumption. Hence, in this case we can also apply the inductive hypothesis and finish the proof.
\end{proof}

\xminusxweightlemma*

\begin{remark}
    The proof of this lemma mostly involves binomial coefficient manipulations, which might seem a bit unmotivated. Plotting contributions from different types on a staircase diagram was very helpful for arriving at it.
\end{remark}

\begin{proof}
    We will show that for $(A, C)$ a lower type, we have
    \begin{multline}
    \label{lower_ac_part}
        U(A, C) = \sum_{\substack{c \equiv C \ (\modd k+1), \\ c \leqslant C}} \sum_{j \geqslant 0} \binom{l_C(c)}{j} \binom{n}{c+j} \binom{n-c-j-l_C(c)-1}{B-j}\\ - \sum_{\substack{c \equiv C-1 \ (\modd k+1), \\ c \leqslant C-1}} \sum_{j \geqslant 0} \binom{l_{C-1}(c)}{j} \binom{n}{c+j} \binom{n-c-j-l_{C-1}(c)-1}{B-j}
    \end{multline}
    and that for $(A, C)$ an upper type, we have
    \begin{multline}
    \label{upper_ac_part}
        U(A, C) = \sum_{\substack{c \equiv C \ (\modd k+1), \\ c \geqslant C}} \sum_{j \geqslant 0} \binom{l_{C}(c)-1}{j} \binom{n}{c+j} \binom{n-c-j-(l_{C}(c)-1)-1}{B-j}\\ - \sum_{\substack{c \equiv C+1 \ (\modd k+1), \\ c \geqslant C+1}} \sum_{j \geqslant 0} \binom{l_{C+1}(c)-1}{j} \binom{n}{c+j} \binom{n-c-j-(l_{C+1}(c)-1)-1}{B-j},
    \end{multline}
    which together (when we substitute $A-k, C+k$ for $A, C$ in the second one) imply the lemma.

    Let us first show Equation \ref{lower_ac_part}. To do so, we will mostly use Equations \ref{eq_step_1} and \ref{eq_step_2}. For easier manipulations, let us denote
    \begin{equation*}
        D(d, A, C) := S(d, A, C) - S(d, A+1, C-1).
    \end{equation*}
    For any $I$, we have:
    \begin{multline*}
        U(A, C) = \sum_{0 \leqslant i \leqslant I} \sum_{0 \leqslant j \leqslant i} \binom{i}{j} D(i, A+i(1+k), C-i(1+k)+j)\\
        + \sum_{0 \leqslant j \leqslant I} \binom{I}{j} \bigg( R\Big(I+1, A+(I+1)(1+k), C-(I+1)(1+k)+j\Big)\\ + R\Big(I+1, A+(I+1)(1+k), C-(I+1)(1+k)+j+1\Big) \bigg)
    \end{multline*}
    (for $I=0$, this is simply Equation \ref{eq_step_1}, and for greater $I$s it follows by induction and by applying \ref{eq_step_2}).

    We note that since for $j > i$ we have $\binom{i}{j}=0$, we can change the inner summation from range $0 \leqslant j \leqslant i$ to range $0 \leqslant j$.
    
    Further noticing that for $I$ sufficiently large, $C- Ik < 0$ and for any $j \leqslant I$ we have
    \begin{multline*}
        R\Big(I+1, A+(I+1)(1+k), C-(I+1)(1+k)+j\Big)\\ = R\Big(I+1, A+(I+1)(1+k), C-(I+1)(1+k)+j+1\Big) = 0,
    \end{multline*}
    we get that
    \begin{equation*}
        U(A, C) = \sum_{i \geqslant 0} \sum_{j \geqslant 0} \binom{i}{j} D(i, A+i(1+k), C-i(1+k)+j).
    \end{equation*}
    After applying Lemma \ref{s_closed_form}, we get that
    \begin{multline*}
        U(A, C) = \sum_{i \geqslant 0} \sum_{j \geqslant 0} \binom{i}{j} \binom{n}{C-i(1+k)+j} \binom{n-C+i(k+1)-j-i-1}{B-j}\\ - \sum_{i \geqslant 0} \sum_{j \geqslant 0} \binom{i}{j} \binom{n}{C-1-i(1+k)+j} \binom{n-C+i(k+1)-j-i}{B-j}.
    \end{multline*}
    Changing from summation over $i$ to summation over $c \equiv C \ (\modd k+1)$ (so $i$ becomes $\frac{C-c}{k+1}=l_C(c)$), we further get that
    \begin{multline*}
        U(A, C) = \sum_{\substack{c \equiv C \ (\modd k+1), \\ c \leqslant C}} \sum_{j \geqslant 0} \binom{l_C(c)}{j} \binom{n}{c+j} \binom{n-c-j-l_C(c)-1}{B-j}\\ - \sum_{\substack{c \equiv C-1 \ (\modd k+1), \\ c \leqslant C-1}} \sum_{j \geqslant 0} \binom{l_{C-1}(c)}{j} \binom{n}{c+j} \binom{n-c-j-l_{C-1}(c)-1}{B-j},
    \end{multline*}
    as desired.

    Now, we need to prove Equation \ref{upper_ac_part}.
    Using Equations \ref{eq_step_1_prime}, \ref{eq_step_2} and Lemma \ref{s_closed_form_prime}, analogously to our considerations above, we get that for $(A, C)$ an upper type we have
    \begin{multline*}
        U(A, C) = \sum_{i \geqslant 0} \sum_{j \geqslant 0} \binom{i}{j} \Big( S'(i, A-i(1+k)+j, C+i(1+k)) - S'(i, A-i(1+k)+j-1, C+i(1+k)+1) \Big)\\
        = \sum_{\substack{a \equiv A \ (\modd k+1), \\ a \leqslant A}} \sum_{j \geqslant 0}  \binom{l_A(a)}{j} \bigg( \binom{n}{a+j}\binom{n-a-j-i-1}{B-j} - \binom{n}{a+j-1}\binom{n-a-j-i}{B-j} \bigg)\\
        = \sum_{\substack{a \equiv A \ (\modd k+1), \\ a \leqslant A}} \sum_{j \geqslant 0}  \binom{l_A(a)}{j}\binom{n}{a+j}\binom{n-a-j-i-1}{B-j}\\ - \sum_{\substack{a \equiv A-1 \ (\modd k+1), \\ a \leqslant A}} \sum_{j \geqslant 0}  \binom{l_A(a)}{j}\binom{n}{a+j}\binom{n-a-j-i-1}{B-j}.
    \end{multline*}
    Hence, it suffices to show that
    \begin{multline}
    \label{what_we_need_lemma}
        \sum_{\substack{a \equiv A \ (\modd k+1), \\ a \leqslant A}} \sum_{i \geqslant 0}  \binom{l_A(a)}{i}\binom{n}{a+i}\binom{n-a-i-l_A(a)-1}{B-i}\\ = \sum_{\substack{c \equiv C \ (\modd k+1), \\ c \geqslant C}} \sum_{j \geqslant 0} \binom{l_{C}(c)-1}{j} \binom{n}{c+j} \binom{n-c-j-(l_{C}(c)-1)-1}{B-j}
    \end{multline}
    (where we changed the summation over $j$ on the left-hand side into a summation over $i$, as this will make our calculations clearer later). 
    First, we note that for $c := n-B-a$, we have
    \begin{multline*}
        \binom{l}{i} \binom{n}{a+i}\binom{n-a-i-l-1}{B-i}=\binom{l}{i} \binom{n}{a+i}\binom{n-a-i-l-1}{n-c-a-i}\\
        = \binom{l}{i} \binom{n}{a+i} \sum_{j \geqslant 0} \binom{-l-1}{j} \binom{n-a-i}{n-c-a-j}\\
        = \binom{n}{a+i} \sum_{j \geqslant 0} (-1)^j \binom{l}{i}\binom{l+j}{j} \binom{n-a-i}{n-c-a-j}
        = \sum_{j \geqslant 0} (-1)^j \binom{l+j}{i, j}\binom{n}{a+i, c+j}.
    \end{multline*}
    Hence, the left-hand side of Equation \ref{what_we_need_lemma} equals
    \begin{equation}
    \label{lhs_first_transformation}
        \sum_{\substack{a \equiv A \ (\modd k+1), \\ a \leqslant A}} \sum_{i \geqslant 0} \sum_{j \geqslant 0}  (-1)^j \binom{l_A(a)+j}{i + j}\binom{i+j}{i}\binom{n}{a+i, c+j}.
    \end{equation}
    Since $c=n-B-a$, we can change the summation from $\sum_{\substack{a \equiv A \ (\modd k+1), \\ a \leqslant A}}$ to $\sum_{\substack{c \equiv C \ (\modd k+1), \\ c \geqslant C}}$, where $l_A(a)=\frac{A-a}{k+1}=\frac{c-C}{k+1}=-l_C(c)$. Hence, the left-hand side of Equation \ref{what_we_need_lemma} equals
    \begin{equation*}
        \sum_{\substack{c \equiv C \ (\modd k+1), \\ c \geqslant C}} \sum_{i \geqslant 0} \sum_{j \geqslant 0} (-1)^j \binom{-l_C(c)+j}{i+j}\binom{i+j}{i}\binom{n}{a+i, c+j}.
    \end{equation*}
    Using the fact that for any $p, q$ positive we have $\binom{p-q}{p}(-1)^p=\binom{q-1}{p}$, we can transform it to
    \begin{equation}
    \label{expression_lhs}
        \sum_{\substack{c \equiv C \ (\modd k+1), \\ c \geqslant C}} \sum_{i \geqslant 0} \sum_{j \geqslant 0} (-1)^i \binom{l_C(c)-1+i}{i+j}\binom{i+j}{i}\binom{n}{a+i, c+j}.
    \end{equation}
    We can transform the right-hand side of Equation \ref{what_we_need_lemma}, completely analogously to the way we transformed the left-hand side into \ref{lhs_first_transformation}. After doing so, we get that the right-hand side also can be expressed as Expression \ref{lhs_first_transformation}, which finishes our proof.
\end{proof}

\symmetrylemma*

\begin{proof}
    We may assume that $(A, C)$ is a lower type (so $(C, A)$ is an upper one). Similarly to earlier, the formal proof of this lemma is written down as binomial symbols manipulation, but it can become intuitive from drawing the formulae on a staircase diagram.
    
    In the same way as in the proof of Lemma \ref{x_minus_x_weight_lemma}, we can show that
    \begin{equation*}
        F_k(n, B, C) = 
        \sum_{\substack{c \equiv C \ (\modd k+1)}} \sum_{i \geqslant 0} \sum_{j \geqslant 0} (-1)^j \binom{l_C(c)+j}{i+j}\binom{i+j}{j}\binom{n}{a+i, c+j}
    \end{equation*}
    and that
    \begin{equation*}
        F_k(n, B, A) = 
        \sum_{\substack{a \equiv A \ (\modd k+1)}} \sum_{i \geqslant 0} \sum_{j \geqslant 0} (-1)^i \binom{(l_A(a)-1)+i}{i+j}\binom{i+j}{i}\binom{n}{a+i, c+j}.
    \end{equation*}
    Since $a+c=n-B=A+C$, $l_C(c)=\frac{C-c}{k+1}=\frac{a-A}{k+1}=-l_A(a)$, we have
    \begin{multline*}
        F_k(n, B, C) = \sum_{\substack{a \equiv A \ (\modd k+1)}} \sum_{i \geqslant 0} \sum_{j \geqslant 0} (-1)^j \binom{-l_A(a)+j}{i+j}\binom{i+j}{j}\binom{n}{a+i, c+j}\\
        = \sum_{\substack{a \equiv A \ (\modd k+1)}} \sum_{i \geqslant 0} \sum_{j \geqslant 0} (-1)^i \binom{(l_A(a)-1)+i}{i+j}\binom{i+j}{i}\binom{n}{a+i, c+j} = F_k(n, B, A),
    \end{multline*}
    (where in the second equality we used $(-1)^p\binom{p-q}{p}=\binom{q-1}{p}$) as desired.
\end{proof}

\xminusxotherformula*

\begin{proof}
    First, we note that we have (where in the second equality we use $(-1)^b\binom{-a}{b}=\binom{a+b-1}{b}$)
    \begin{multline*}
        \binom{n-c-j-l_C(c)-1}{B-j}
        =\sum_{0 \leqslant i \leqslant B-j} \binom{n-c-j}{i} \binom{-l_C(c)-1}{B-j-i}\\
        =\sum_{0 \leqslant i \leqslant B-j} \binom{n-c-j}{i} \binom{l_C(c)+B-j-i}{B-j-i} (-1)^{B-j-i}.
    \end{multline*}
    Hence, we also have
    \begin{multline*}
        \sum_{\substack{c \equiv C \ (\modd k+1)}} \sum_{j \geqslant 0} \binom{l_C(c)}{j} \binom{n}{c+j} \binom{n-c-j-l_C(c)-1}{B-j}\\
        = \sum_{c \equiv C \ (\modd k+1)} \sum_{j \geqslant 0} \sum_{0 \leqslant i \leqslant B-j} \binom{n}{c+j, i} \binom{l_C(c)+B-i-j}{B-i-j, j} (-1)^{B-i-j}.
    \end{multline*}
    The above equals (where in the equation, we change summation over $i$ to summation over $h:=i+j$)
    \begin{multline*}
        \sum_{d \equiv c \ (\modd k+1)} \sum_{j \geqslant 0} \sum_{0 \leqslant i \leqslant B-j} \binom{n}{c+j+i} \binom{c+j+i}{c+j} \binom{l_C(c)+B-(i+j)}{l_C(c)-j, j} (-1)^{B-(i+j)}\\
        = \sum_{d \equiv c \ (\modd k+1)} \sum_{j \geqslant 0} \sum_{j \leqslant h \leqslant B} \binom{n}{c+h} \binom{c+h}{c+j} \binom{l_C(c)+B-h}{l_C(c)-j, j}(-1)^{B-h}\\
        = \sum_{d \equiv c \ (\modd k+1)} \sum_{0 \leqslant h \leqslant B} \binom{n}{c+h} \binom{l_C(c)+B-h}{l_C(c)} (-1)^{B-h} \sum_{j \leqslant h} \binom{c+h}{c+j} \binom{l_C(c)}{l_C(c)-j}.
    \end{multline*}
    Using the well-known fact that $\sum_{i} \binom{r}{a+i}\binom{s}{k-i}=\binom{r+s}{a+k}$, we get that the above equals
    \begin{equation*}
        \sum_{c \equiv C \ (\modd k+1)} \sum_{0 \leqslant h \leqslant B} \binom{n}{c+h} \binom{l_C(c)+B-h}{l_C(c)} \binom{l_C(c)+h+c}{l_C(c)+c}(-1)^{B-h},
    \end{equation*}
    as desired (with a local variable denoted $h$ instead of $i$).
\end{proof}

\section{A new proof of Sperner's theorem}
\label{appendix_sperner}
This proof is a special case of our proof from Section \ref{section_d1} for $k=n$.

Let us decompose $\mathbb{P}([n])$ into weighted symmetric chains, so that for any $X \in \mathbb{P}([n])$ the sum of weights of chains passing through $X$ is $1$ and all weights are positive (we call this sum an \emph{induced weight}).

All chains of the same length will have the same weights, so (if we denote by $c_i$ the number of chains of length $i$) we have:
\begin{itemize}
    \item The chains of length $n+1$ have weight $\frac{1}{c_{n+1}}$
    \item The chains of length $n-1$ have weight $\frac{1}{c_{n-1}}(n-1)$
    \item \dots
    \item The chains of length $n+1-2i$ have weight $\frac{1}{c_{n+1-2i}}\Big( \binom{n}{i} - \binom{n}{i-1} \Big)$.
\end{itemize}
We note that this assignment of weights is positive (as $\binom{n}{i}>\binom{n}{i-1}$ for $i \leqslant \frac{n}{2}$). Moreover, for any set $X \in \mathbb{P}([n])$ the sum of weights passing through it is $1$. Indeed, since the chains are symmetric under a permutation of elements of $[n]$, all sets of the same size have the same induced weight, and the sum of induced weights of all sets of size $i$ is clearly $\binom{n}{i}$.

Let $B = \{X \in \mathbb{P}([n]): |X|= \lfloor \frac{n}{2} \rfloor\}$. We note that $B$ has an element in every symmetric chain.
Hence, for any antichain $A \subseteq \mathbb{P}([n])$ we have (denoting for a chain $C$ its weight by $w_C$):
\begin{equation*}
    |A| = \sum_{X \in A} 1 = \sum_{a \in A} \sum_{\substack{C \ \text{chain}, \\ \ X \in C}} w_{C} \leqslant \sum_{C \ \text{chain}} w_C = \sum_{X \in B} \sum_{\substack{C \ \text{chain}, \\ \ X \in C}} w_{C} = \sum_{X \in B} 1 = |B|,
\end{equation*}
which shows that there is no antichain in $\mathbb{P}([n])$ larger than $B$.

Now we only need to show that $B$ is the unique optimal set (together with $\{X \in \mathbb{P}([n]): |X|=\lceil \frac{n}{2} \rceil\}$).

For $n$ even this is clear, since for any $X$ with $|X|=\frac{n}{2}$, the singleton chain $(X)$ has positive weight, so any optimal set must contain it.

For $n$ odd we note that for any $X \subset Y$ with $X=\lfloor \frac{n}{2} \rfloor$, $Y=\lceil \frac{n}{2} \rceil$ the chain $(X, Y)$ has positive weight, so any optimal set must contain exactly one $X$ or $Y$. From this, we can easily get that an optimal set must contain either $\{X \in \mathbb{P}([n]): |X|=\lfloor \frac{n}{2} \rfloor\}$ or $\{X \in \mathbb{P}([n]): |X|=\lceil \frac{n}{2} \rceil\}$ as a subset, so in fact must be equal to one of these sets.

\end{document}